\newtheorem{theorem}{Theorem}[section]
\newtheorem{example}{Example}[section]
\newtheorem{remark}{Remark}[section]
\newcommand{\Q}{\mathbb{Q}}
\newcommand{\Z}{\mathbb{Z}}
\newcommand{\N}{\mathbb{N}}
\newcommand{\R}{\mathbb{R}}
\newcommand{\dsum}{\displaystyle\sum}
\newcommand{\dprod}{\displaystyle\prod}
\newcommand{\dmax}{\displaystyle\max}
\begin{document}

\title{On the complexity of some fuzzy integer programs}
\author{Víctor Blanco and Justo Puerto}
\email{vblanco@us.es; puerto@us.es}

\date{May 15, 2009}

\address{Departamento de Estad\'istica e Investigaci\'on Operativa, Universidad de
Sevilla}

\keywords{Fuzzy Programming, Integer Programming, Theoretical Complexity,
Generating Functions}

\subjclass[2000]{90C70, 90C10, 90C60, 05A15}

\maketitle

\begin{abstract}
Fuzzy optimization deals with the problem of determining 'optimal'
solutions of an optimization problem when some of the elements that
appear in the problem are not precise. In real situations it is usual to
have information, in systems under consideration, that is not exact. This
imprecision can be modeled in a fuzzy environment. Zadeh
\cite{zadeh65} analyzed systems of logic that permit truth values
between zero and one instead of the classical binary true-false logic. In
this framework, satisfying a certain condition means to evaluate how
close are the elements involved to the complete satisfaction. Then, each
element in a 'fuzzy set' is coupled with a value in $[0,1]$ that represents
the membership level to the set.

In linear programming some or all the elements that describe a problem may be
considered fuzzy: objective function, right-hand side vector or constraint matrix,
the notion of optimality (ordering over the feasible solutions), the level of
satisfaction of the constraints, etc. Moreover, in integer programs, the integrality
constraints may be seen as fuzzy constraints.

Here, we present new complexity results about linear integer
programming where some of its elements are considered fuzzy using
previous results on short generating functions for solving multiobjective
integer programs.
\end{abstract}

\section{Introduction}
Integer linear programming is a special case of linear programming in which all
variables are required to take on integer values only. Its importance is due to the
amount of real-world problems that can be modeled as integer programs. Some
of the main well-known applications of integer programming concerns the
management and efficient use of resources: distribution of goods, production
scheduling, machine sequencing, capital budgeting, etc. Furthermore, from the
mathematical viewpoint many combinatoric and geometrical problems in graph
theory and logic can seen as integer programs. For that reason, and for some
others (interesting research area...), many textbooks and publications are
exclusively devoted to the analysis of integer programs, either from a theoretical
or practical viewpoint. Some of the text books in this area are those by
Schrijver\cite{schrijver86}, Nemhauser-Wolsey \cite{nemhauser-wolsey88},
Sierksma \cite{sierksma02}, among many others.

One of the difficulties when modeling a real-world problem by a mathematical
programming problem is the possible imprecision of the data or the interpretation
of the constraints. For instance, suppose in a simple knapsack problem that we
want to maximize the overall benefits of selecting a subset of items, spending
``around'' \$$b$. We can formulate this problem assuring that we are not going to
spend more than \$$b$ but in the original problem we were determined to spend
around \$$b$ and maybe increasing the budget a little (such as we consider
``around'') one can obtain more benefits. Fuzzy programming deals with this
fuzziness which causes difficulties in modeling.

Linear programming problems can be fuzzified in many ways. For instance,
considering that the coefficients of the objective function or constraints are fuzzy
numbers or introducing vagueness to the inequalities or to the ordering that
determines the maximization/minimization problem. Moreover, in integer
programs, the integrality of the variables may be considered fuzzy just fixing how
close to an integer number one considers that a real number is integer. Some of
these possibilities for mathematical programming problems are described in
\cite{slowinski90} (Chapter 4).

In general, integer linear problems are NP-hard, then the complexity of the
fuzzyfication of these problems is as hard as crisp integer programming.

In this paper we present some complexity results for fuzzy integer problems that
have never been stated before. We give polynomially results in the sense of
Lenstra in his well-known result about the polynomially of (crisp) integer
problems in fixed dimension. Using transformations on fuzzy problems to
multiobjective integer programs we apply generating functions to prove the
polynomial complexity of some models of fuzzy integer problems. Short rational
generating  functions were initially used by Barvinok \cite{barvinok94} as a tool to
develop an algorithm for counting the number of integer points inside convex
polytopes, based in the previous geometrical papers by Brion
\cite{brion88},Khovanskii and Puhlikov \cite{khovanskii-puhlikov92}, and Lawrence
\cite{lawrence91}. The main idea is encoding those integral points in a rational
generating  function in as many variables as the dimension of the space where
the body lives.  Actually, Barvinok presented a polynomial-time algorithm when
the dimension, $n$, is fixed, to compute those functions.

The paper is organized as follows. In Section \ref{sec:sgf} we recall some previous notions and results about short generating functions of rational polytopes for multiobjective integer programming. We present, in Section \ref{sec:fineq}, complexity results for fuzzy integer programs where the inequalities are fuzzified and in Section \ref{sec:fobj} for integer programs with fuzzy coefficients in the objective functions. Finally, in Section \ref{sec:conc} we give some conclusions about the results presented through this paper.

\section{Short rational generating functions}
\label{sec:sgf}
Short rational functions were used by Barvinok \cite{barvinok94}
as a tool to develop an algorithm for counting the number of
integer points inside convex polytopes, based in the previous
geometrical paper by Brion \cite{brion88}. The main idea is
encoding those integral points in a rational function in as many
variables as the dimension of the space where the body lives. Let
$P \subset \R^d$ be a given convex polyhedron, the integral points
may be expressed in a formal sum $f(P,z) = \sum_\alpha z^\alpha$
with $\alpha = (\alpha_1, \ldots, \alpha_d) \in P\cap\Z^d$, where
$z^\alpha = z_1^{\alpha_1}\cdots z_d^{\alpha_d}$. Barvinok's aimed
objective was representing that formal sum of monomials in the
multivariate polynomial ring $\Z[z_1, \ldots, z_n]$, as a
``short'' sum of rational functions in the same variables.
Actually, Barvinok presented a polynomial-time algorithm when the
dimension, $n$, is fixed, to compute those functions. A clear
example is the polytope $P = [0,N] \subset \R$: the long
expression of the generating function is $f(P,z) = \sum_{i=0}^N
z^i$, and it is easy to see that its representation as sum of
rational functions is the well known formula
$\frac{1-z^{N+1}}{1-z}$.

We recall here some results on short rational functions
for rational polytopes, that we use in our development. For details the
interested reader is referred to \cite{barvinok94, barvinok03}.

Let $P = \{x \in \R^n: A\,x \leq b\}$ be a rational polytope in
$\R^n$. The integer points inside $P$ can be encoded in the following ``long" sum of
monomials:
$$
f(P;z) = \dsum_{\alpha\in P\cap\Z^n}\,z^\alpha
$$
where $z^\alpha = z_1^{\alpha_1}\cdots z_n^{\alpha_n}$. Then, to
re-encode, in polynomial-time for fixed dimension, these integer
points in a ``short" sum of rational functions in the form
$$
f(P;z) = \dsum_{i\in I} \varepsilon_i
\dfrac{z^{u_i}}{\dprod_{j=1}^n (1-z^{v_{ij}})}
$$
where $I$ is a polynomial-size indexing set, and where
$\varepsilon \in \{1,-1\}$ and $u_i , v_{ij} \in \Z^n$ for all $i$
and $j$ (Theorem 5.4 in \cite{barvinok94}).

This encoding tool allows us to present algorithms either for counting integer
points inside polytopes (see \cite{barvinok94}) or for solving single (see
\cite{deloera04}) and multi-objective integer problems (see
\cite{blanco-puerto07b, blanco09}). The following result states that encoding the
set of non dominated solutions of a multiobjective integer linear problem can be
done in polynomial time when the dimension is fixed. It will be useful for our
development.
\begin{theorem}[\cite{blanco09}]
\label{theo:nd} Let $A \in \Z^{m\times n}$, $b \in
\Z^m$, $C =(c_1, \ldots, c_k) \in \Z^{k\times n}$, and assume that the number of variables $n$ is
fixed. Suppose $P = \{x \in \R^n : A\,x \leq b, x \geq 0\}$ is a
rational convex polytope in $\R^n$.
Let $\mbox{\it MOILP}_{A,C}(b)$ the following multiobjective linear integer problem
\begin{align}
 \label{MOILP_gral}
\max &\; (c_1\,x, \ldots,  c_k\,x) =: C\,x\tag{$\mbox{\it MOILP}_{A,C}(b)$}\\
s.t.&\; x \in P \cap \Z^n_+
\end{align}
whose solutions (called \textit{nondominated solutions}) are those $x^\star \in P \cap \Z^n_+$ such that there not exist any other $y \in P\cap \Z^n_+$ with $c_i\,x^\star \leq c_i\,y$ for all $i=1, \ldots, k$ with at least one strict inequality.

Then, we can encode, in polynomial time, the entire set of nondominated solutions for
$\mbox{\it MOILP}_{A,C}(b)$ in a short sum of rational functions.
\end{theorem}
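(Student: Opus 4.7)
The plan is to reduce the problem to three well-understood operations on short rational generating functions: Barvinok's algorithm itself, the Barvinok--Woods projection theorem, and Boolean combinations via Hadamard products. The idea is to construct an auxiliary polytope in dimension $2n$ whose integer points encode the ``is dominated by'' relation, remove the dominated points by projecting and complementing, and so isolate the nondominated set.

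First, I would set up the auxiliary domination polytopes. For each $i \in \{1,\ldots,k\}$, define
\[
Q_i \;=\; \bigl\{(x,y)\in\R^{2n} : Ax \leq b,\ Ay \leq b,\ x,y \geq 0,\ c_j y \geq c_j x\ \forall j,\ c_i y \geq c_i x + 1\bigr\}.
\]
The $+1$ shift captures strict inequality on integer arguments, so the integer points of $Q_i$ are exactly pairs $(x,y)\in (P\cap \Z^n_+)^2$ with $y$ dominating $x$ and strictly improving coordinate $i$. Since $n$ is fixed, so is the ambient dimension $2n$, and Barvinok's theorem (Theorem 5.4 in \cite{barvinok94}) provides a short rational function $g_i(z,w)$ for $Q_i \cap \Z^{2n}$ in polynomial time.

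Second, I would aggregate and project. Combining $g_1,\ldots,g_k$ through at most $k-1$ Boolean-union operations on short generating functions yields, in polynomial time, a short rational function $g(z,w)$ encoding $U = \bigcup_{i=1}^k Q_i \cap \Z^{2n}$, i.e.\ all pairs $(x,y)$ with $y$ dominating $x$. Applying the Barvinok--Woods projection theorem to the coordinate projection $\pi : (x,y) \mapsto x$, which runs in polynomial time because the source dimension $2n$ is fixed, produces a short rational function $h(z)$ encoding exactly the set of \emph{dominated} integer points in $P$.

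Finally, encoding $P\cap \Z^n_+$ itself by Barvinok's algorithm as $f(P;z)$ and computing the Boolean difference $f(P;z) \setminus h(z)$ via Hadamard products delivers a short rational encoding of the nondominated set, as required. The decisive step is the projection: general projections of short-GF-encoded sets can blow up uncontrollably, and it is precisely the Barvinok--Woods theorem in fixed dimension that keeps this polynomial. The other steps, namely the polytopal encoding of $Q_i$, the integrality trick $c_i y \geq c_i x + 1$ for strict inequality, the iterated unions, and the final complement, are straightforward once the projection tool is available.
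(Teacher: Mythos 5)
Your overall strategy---encode the ``is dominated by'' relation as integer points of polytopes in $\R^{2n}$, compute their short generating functions by Barvinok's algorithm, project onto the dominated coordinate via Barvinok--Woods, and subtract the result from $f(P;z)$---is exactly the technique behind this result (the theorem itself is quoted from \cite{blanco09}, but the proof of Theorem \ref{theo:complex1} in this paper carries out precisely this construction in the bi-objective case). The one place where your version genuinely diverges, and where it is weaker, is the treatment of strictness. You realize the condition ``at least one strict inequality'' as the union $\bigcup_{i=1}^k Q_i$ and propose to assemble its generating function by $k-1$ pairwise Boolean unions. Each such union requires a Hadamard product; the Hadamard product of two short rational functions of indices $\ell_1$ and $\ell_2$ has index roughly $\ell_1+\ell_2$ and is computable in polynomial time only for bounded index, so after $k-1$ iterations the index grows with $k$ and your aggregation step is polynomial only if $k$ is fixed, whereas the theorem fixes only $n$ (and the paper later insists the complexity does not depend on the number of objectives). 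The standard way around this, used in the proof of Theorem \ref{theo:complex1}, is to avoid the union altogether: since all data are integral, ``$c_j y \geq c_j x$ for all $j$ with at least one strict'' is equivalent on $\Z^{2n}$ to the conjunction of $c_j y \geq c_j x$ for all $j$ with the single additional constraint $\sum_{j=1}^k (c_j y - c_j x) \geq 1$, so the entire domination relation is the set of integer points of \emph{one} rational polytope, and a single application of Barvinok's algorithm plus one projection suffices, with no dependence of the generating-function index on $k$. A second, smaller point: the Barvinok--Woods projection theorem is stated for the image of $Q \cap \Z^{2n}$ with $Q$ a polytope, not for an arbitrary set presented by a short rational function, so in your scheme you should project each $Q_i$ first and take unions afterwards (harmless, since projection commutes with union)---or, better, adopt the single-polytope formulation and the issue disappears.
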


One useful result that is used for the proof of the above theorem and for the results presented in this paper is the one that states that the computation of the short generating function of the intersection of two polytopes is doable in polynomial time, for fixed dimension, using the generating functions of both polytopes (Theorem 3.6 in \cite{barvinok03}) . Basically, it uses the Hadamard product of a pair of power series. Given $g_1(z) =
\dsum_{m\in\Z^d}\beta_{m}\,z^m$ and $g_2(z) = \dsum_{m\in\Z^d}\gamma_{m}\,z^m$, the Hadamard product $g = g_1 \ast g_2$ is the power series
$$
g(z) = \dsum_{m \in \Z^n} \eta_m\,z^m \qquad \text{where $\eta_m =
\beta_{m}\gamma_{m}$}.
$$
%The above result assures the polynomially to encode the set of nondomianted solutions, but not the enumeration of them. Different algorithms, that are polynomial in some sense, where proposed in \cite{blanco09} to enumerate them. One of the algorithm is the following:
%\begin{theorem}[\cite{blanco09}]
%\label{theo:pd-nd}
%Assume $n$ is a constant. There is a polynomial delay (bounded on $log(M)$, where $M$ is the size of the input data) procedure to obtain the
%entire set of nondominated solutions of \ref{MOILP_gral} using generating functions.
%\end{theorem}

\section{Integer programs with fuzzy inequalities}
\label{sec:fineq}
Let $P$ be a rational polytope, $A \in \Z^{m\times n}$, $b\in \Z^m$ and $c \in \Z^n$. Consider the following integer program where some constraints are fuzzified:
\begin{equation}
\label{fip}
\begin{array}{lrl}
\max & \;  c\,x& \\
s.t.&  A\,x &\lesssim b\tag{$\mbox{\it FIP}^{\lesssim}_{A,c}(b)$}\\
 & x&\in P \cap \mathbb{Z}_+^n
 \end{array}
 \end{equation}
where $\lesssim$ means that the inequalities must be ``almost satisfied''. The solutions of this problem are then pairs $(x, \mu(x))$ where $x$ is a feasible solution to the problem $\max\{cx : x \in P \cap \Z^n\}$ and $\mu(x)$ the degree of satisfaction of $x$ to the system of inequalities $Ax \leq b$. $\mu$ is called the membership function of the fuzzy set $\tilde{X} = \{(x, \mu(x)): x \in X\}$, where in our case $X=P \cap \Z^n$. The only theoretical requirements to $\mu$ are:
\begin{enumerate}
\item $\mu(X) \subseteq \R_+$.
\item $sup_x \mu(x) < \infty$.
\end{enumerate}
Generally, elements with a zero degree of membership are not listed since it is considered that they are far to be a ``crisp'' (ordinary) element.

If $sup_x \mu(x) =1$ the fuzzy set is called normal, and of course, any non normal fuzzy set can be normalized dividing $\mu$ by $sup_x \mu(x)$. Then, from now on, we consider, w.l.o.g., normal fuzzy sets.

Then, for \ref{fip}, we are interested in membership functions that measure the satisfaction of the constrains or how far is a solution from the crisp system of inequalities, i.e., for each inequality we have a membership function $\mu_i$, $i=1, \ldots, m$ such that:
$$
\mu_i(x) = \left\{ \begin{array}{ll} 0 & \mbox{if $a_i\,x > b_i + \varepsilon_i$}\\
f(x) & \mbox{if $b_i < a_ix \leq b_i + \varepsilon_i$}\\
1 & \mbox{if $a_ix \leq b_i$}
\end{array}\right.
$$
where $\varepsilon_i$ are a nonnegative real numbers that measures how we are considering that a solution does not satisfied the inequalities at all, and $f: \R^n \rightarrow [0,1]$. We assume that $\varepsilon_i \in \Q_+$, for $i=1, \ldots, m$, and that $f(\Z^n) \subseteq \Q \cap [0,1]$. These assumptions are not too restrictive since it is usual to consider linear membership functions (triangular or trapezoidal fuzzy sets, see \cite{zimmermann01}) whose coefficients are rational.

Then, the overall membership function to the system of linear constraints $A\,x \leq b$ is given by:
$$
\mu(x) = \min_i \mu_i(x).
$$

We are interested in maximizing the linear function $c\,x$ in \ref{fip} such that $x \in P \cap \Z^n$ and with the maximum value of the membership function. Since on $R^2$ the componentwise ordering is weak, the above problem is equivalent to finding the nondominated solutions to a biobjective problem with objective functions $(c\,x, \mu(x))$. Then, we deal with the following equivalent biobjective crisp integer problem:
\begin{equation}
\label{crisp0}
\begin{array}{lr}
\max & \;  (c\,x, \mu(x)) \\
s.t. & x\in P \cap \mathbb{Z}_+^n
 \end{array}
 \end{equation}
where the fuzzy constraints have been substituted by the membership function $\mu$ in the objective functions.

By definition of $\mu$, Problem \ref{crisp0} is equivalent to:
\begin{equation}
\label{crisp1}
\begin{array}{lr}
\max & \;  (c\,x, \min_i \mu_i(x)) \\
s.t. & x\in P \cap \mathbb{Z}_+^n
 \end{array}
  \end{equation}
 and then, equivalent to the following biobjective mixed integer program:
 \begin{equation}
\label{crisp2}
\begin{array}{lrl}
\max & \;  (c\,x, z) \\
s.t. & z \leq \mu_i(x),& i=1, \ldots, m\\
 &x\in P \cap \mathbb{Z}_+^n\\
 &z \in [0, 1]
 \end{array}
\end{equation}
Let us consider $\mu_i$ to be rational linear functions for all $i=1, \ldots, m$. Then, because the number of feasible solutions in the $x$ variable is finite ( $x \in \Z^n$ and $P$ is a polytope) and for each feasible solution $\overline{x}$, $z$ is fixed as $\min_i  \mu_i(\overline{x})$, the number of possible nondominated solutions of Problem \ref{crisp2} is finite. Furthermore, $z$ is rational because $x\in \Z^n$ and $\mu_i$ are rational linear functions. Then, the variable $z$ can be transformed to an integer variable $y=Mz$, with $M$ the least common multiple of all the denominators that appear in the inverses of all subdeterminants of the matrix defined by the linear functions $\mu_i(x)$, $i=1, \ldots, m$. Replacing each inequality $z \leq \mu_i(x)$ by $y \leq M\mu_i(x)$, we have that the equivalent problem:
 \begin{equation}
\label{crisp3}
\begin{array}{lrl}
\max & \;  (c\,x, y) \\
s.t. & y \leq M\mu_i(x),& i=1, \ldots, m\\
 &x\in P \cap \mathbb{Z}_+^n\\
 &y \in [0, M]\cap \Z
 \end{array}
\end{equation}
is a biobjective integer linear program that has the same set of solutions but  where the solutions in $y$ are related with those in $z$ dividing by  $M$. Furthermore, if $c$ is generic for the problem $\max\{cx: x \in P \cap \Z^n\}$, Problem \ref{crisp3} has at most $M+1$ nondominated solutions since for each value of $y$ there is exactly one solution in $x$. Then, this problem may be solved solving $M+1$ single objective integer problems, one for each of the possible values of $y$.
%
%Once Problem \eqref{crisp3} is solved, say $ND=\{(x^1,y^k), \ldots, (x^k, y^k)\}$ the set of nondominated solutions of Problem \ref{crisp3}, the membership value at each $x^j$, $j=1, \ldots, k$, is $\frac{y^k}{M}$.

It is worth noting that fuzzy integer programming is NP-hard. Indeed, reduction comes from crisp integer programming. It is well-known that finding an optimal solution of a general integer program, when the dimension is part of the input, is NP-hard (see \cite{schrijver86}). Thus, since we can state that $\hat{x}$ is a an optimal solution to Problem \eqref{crisp0} if and only if there exists $\hat{y} \in [0,M] \cap \Z$ such that $(\hat{x}, \frac{\hat{y}}{M})$ is an optimal solution to the fuzzy integer program \ref{fip}, the conclusion follows.

This shows that fuzzy integer programming is as hard as crisp integer
programming. Nevertheless, the situation is even harder because crisp integer
programming in fixed dimension is polynomial (see \cite{lenstra81}) but fuzzy
integer programming is equivalent to bicriteria linear integer programming (see
\cite{blanco09}) which is also NP-hard.

In spite of that, there is a natural (not easy) way to find all the solutions to \ref{fip} which is based on solving $M+1$ crisp integer problems of the form \eqref{crisp0}, fixing $y=0, 1, \ldots, M$. However this approach does not ensure polynomiality even in fixed dimension. Here the problem comes from $M+1$, the number of problems to be solved. This figure might be exponential in the input size and therefore even solving each subproblem in polynomial time the overall complexity will be only pseudopolynomial.

The best complexity result that we can state is given by the next theorem.
\begin{theorem}
\label{theo:complex1}
We can encode in polynomial time, for fixed dimension, the entire set of solutions for
\ref{fip} in a short sum of rational functions.
\end{theorem}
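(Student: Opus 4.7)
The plan is to leverage the reduction, already carried out in the preceding discussion, that converts \ref{fip} into the equivalent biobjective crisp integer linear program \ref{crisp3}, and then invoke Theorem \ref{theo:nd} on that problem.

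First I would recall that the solution set of \ref{fip} is in bijection with the nondominated set of \ref{crisp3} via the correspondence $(\hat{x}, \mu(\hat{x})) \longleftrightarrow (\hat{x}, M\mu(\hat{x}))$, where $M$ is the integer scaling constant defined in the excerpt. Consequently, producing a short rational generating function for the nondominated set of \ref{crisp3} automatically yields a short rational generating function for the solution set of \ref{fip}, up to the trivial renaming $y \mapsto y/M$ on the last coordinate.

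Next I would verify that \ref{crisp3} satisfies the hypotheses of Theorem \ref{theo:nd}. It is a multiobjective integer linear program in $n+1$ variables $(x,y)$ with $k=2$ objectives $(cx,y)$, defined by the system $Ax \leq b$, $x \geq 0$, $0 \leq y \leq M$, $y \leq M\mu_i(x)$ for $i=1,\ldots,m$. Because $\mu_i$ are rational affine functions, clearing denominators yields an integer constraint matrix defining a rational polytope in $\R^{n+1}$. Since $n$ is fixed, the enlarged dimension $n+1$ is also fixed, and $k=2$ is trivially fixed. Direct application of Theorem \ref{theo:nd} then produces, in polynomial time, a short sum of rational functions encoding the nondominated set of \ref{crisp3}, and the bijection above reinterprets this encoding as the desired generating function for the solutions of \ref{fip}.

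The one potentially delicate issue, which I would address explicitly, is that $M$ may be exponentially large in the binary input size (this is precisely the obstacle that makes the naive ``solve $M+1$ subproblems'' approach only pseudopolynomial). However, the bit length of $M$ is polynomial in the input size, so $M$ can be written down in polynomial space and appears only as a right-hand side and as a coefficient in the scaled inequalities $y \leq M\mu_i(x)$. Since Barvinok-type short rational function techniques handle constraint data of polynomially bounded bit length in polynomial time when the dimension is fixed, Theorem \ref{theo:nd} still applies, and the overall procedure remains polynomial. No genuinely new combinatorial argument is required beyond this careful bookkeeping; the theorem reduces to a direct corollary of the reduction and Theorem \ref{theo:nd}.
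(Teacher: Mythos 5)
Your proposal is correct and follows essentially the same route as the paper: reduce \ref{fip} to the biobjective integer program \eqref{crisp3} and encode its nondominated set in a short rational generating function. The only difference is presentational --- you invoke Theorem \ref{theo:nd} as a black box on the $(n+1)$-variable problem, whereas the paper unfolds the underlying Barvinok-style construction (the pairwise-domination polytope, the projection $\pi_{2,f}$, and the difference $F(x_1)-\pi_{2,f}(x_1)$) explicitly; your observation that the bit length of $M$ stays polynomial for fixed $n$ is a worthwhile point that the paper leaves implicit.
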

\begin{proof}
Using Barvinok's algorithm (Theorem 5.4 in \cite{barvinok94}),
compute the following generating function in $2n$ variables:
\begin{equation}
\label{rf_tf} f(x_1, x_2) := \dsum_{((u, y_u), (v, y_v)) \in P_{C} \cap \Z^{2n}}
x_1^{(u, y_u)}\,x_2^{(v, y_v)}
\end{equation}
where $\widetilde{P} = \{((u,y_u), (v, y_v)) \in \Z^{n+1} \times \Z^{n+1} : u, v \in P, c\,u -
c\,v \geq 0,  y_u  \leq y_v \text{ and } c\,u + y_u - c\,v - y_v \geq 1\}$. $\widetilde{P}$ is clearly a
rational polytope. For fixed $u \in \Z^n$, the second components,
$(v, y_v)$, in the monomial $x_1^{(u, y_u)}\,x_2^{(v, y_v)}$ of $f(x_1,x_2)$ represent
the solutions dominated by $(u, y_u)$.

 Now, for any function $\varphi$, let $\pi_{1, \varphi}, \pi_{2, \varphi}$
  be the projections of $\varphi(x_1, x_2)$ onto the $x_1$- and $x_2$-variables, respectively. Thus $\pi_{2,f}(x_2)$
 encodes all dominated feasible integral vectors (because the degree vectors of
the $x_1$-variables dominate them, by construction), and it can be
computed from $f(x_1, x_2)$ in polynomial time by Theorem 1.7 in
\cite{barvinok94}.

Let $V(P)$ be the set of extreme points of the polytope $P$ and
choose an integer $R \geq \max\{v_i: v\in V(P), i=1,\ldots,n\}$
(we can find such an integer $R$ via linear programming). For this
positive integer, $R$, and $M$ as described above, let $r((x,z), (R, M))$ be the rational function for
the polytope $\{(u, y_u) \in \R^{n+1}_+: u_i\leq R, i=1, \ldots, n, \text{ and } y_u \leq M+1\}$, its expression is:
$$
r((x,z),(R,M))= \left( \dfrac{1}{1-z} +
\dfrac{z^{M+1}}{1-z^{-1}}\right)\dprod_{i=1}^n \left( \dfrac{1}{1-x_i} +
\dfrac{x_i^R}{1-x_i^{-1}}\right).
$$

Define $f(x_1, x_2)$ as above, $\pi_{2,f}(x_1)$ the projection of $f$
onto the second set of variables as a function of the
$x_1$-variables and $F(x_1)$ the short generating function of $P$.
They are computed in polynomial time by Theorem 1.7 and Theorem
5.4 in \cite{barvinok94} respectively. Compute the following
difference:
$$
h(x_1) := F(x_1) - \pi_{2,f}(x_1).
$$

This is the sum over all monomials $x_1^{(u, y_u)}$ where $(u, y_u) \in P\times [0, M+1]$ is a
nondominated solution, since we are deleting, from the total sum
of feasible solutions, the set of dominated ones.

This construction gives us a short rational function associated
with the sum over all monomials with degrees being the
nondominated solutions for Problem \ref{crisp3}. As a consequence, we
can compute the number of nondominated solutions for the problem.
The complexity of the entire construction being polynomial since
we only use polynomial time operations among four short rational
functions of polytopes (these operations are the computation of
the short rational expressions for $f(x_1,x_2)$, $F(x_1)$, $r((x, z),(R, M))$ and
$\pi_{2,f}(x_1)$).
\end{proof}

The above result states that the solution of the fuzzy problem can be encoded in a short rational generating function in polynomial time for fixed dimension. However, to obtain the explicit list of solutions we should expand, as a Laurent series, the rational functions that appear in that expression.

In the following, we present an efficient procedure to obtain the entire set of solutions for \ref{fip}. For that, we concentrate on a different concept of complexity that has been already used in the literature for
slightly different problems. Computing maximal independent sets on graphs is known to be $\#$P-hard (\cite{garey-johnson79}),
nevertheless there exist algorithms for obtaining these sets which
ensure that the number of operations necessary to obtain two
consecutive solutions of the problem is bounded by a polynomial in
the problem input size. These
algorithms are called polynomial delay. Formally, an algorithm is
said \emph{polynomial delay} if the delay, which is the maximum
computation time between two consecutive outputs, is bounded by a
polynomial in the input size (\cite{johnson88}).

In our case, a polynomial delay algorithm, in fixed dimension, for
solving \ref{fip} means that once the first solution is computed, either in polynomial
time a next fuzzy solution is found or the termination of the algorithm is given as an output.

Next, we present a polynomial delay algorithm, in fixed dimension,
for solving \ref{fip}.

Let $L = \max\{U, l^{-1}\}$ where $U$ and $l$ are respectively, the largest and smallest element that appear in the description of $P$ as a system of inequalities. The pseudocode of a procedure for obtaining the set of solutions of \ref{fip} is shown in Algorithm \ref{alg:logM}.

{\sffamily
\begin{algorithm}[!h]
\label{alg:logM} \SetLine \SetKwInOut{Input}{input}
\SetKwInOut{Output}{output} \textbf{Initialization: }
$\mathcal{M}=[0,L]^n \subseteq P$.

\textbf{Step 1: } Let $\mathcal{M}_1, \ldots, \mathcal{M}_{2^n}$
be the hypercubes obtained dividing $\mathcal{M}$ by the midpoints of its edges.

$i=1$

 \textbf{Step 2: }

\Repeat{$i<=2^n$}{Count the elements encoded in
$r_{\mathcal{M}_i}(x)\ast h(x)$: $n_{\mathcal{M}_i}$. This is the
number of nondominated solutions in the hypercube $\mathcal{M}_i$.

\eIf{$n_{\mathcal{M}_i}=0$}{ \eIf{$i<2^n$}{$i\leftarrow i+1$}{Go
to Step 1 with $\mathcal{M}$ the next hypercube to its predecessor
hypercube}}{

\eIf{$n_{\mathcal{M}_i}=1$}{Let $x^*$ the unique solution in
$\mathcal{M}_i$, $ND = ND \cup \{x^*\}$ and $i\leftarrow i+1$ }{
Go to Step 1 with $\mathcal{M}=\mathcal{M}_i$}}}

\Output{ND}
\caption{Binary search algorithm for solving MOILP using SGF.}
\end{algorithm}}
 The following result states the complexity of this algorithm.
\begin{theorem}
\label{theo:complexfip2}
If the dimension is fixed, Algorithm \ref{alg:logM} is a polynomial-delay method to find all the solutions of \ref{fip}.
\end{theorem}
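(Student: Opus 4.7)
The plan is to verify two things: that each counting step in Algorithm \ref{alg:logM} can be carried out in polynomial time via short generating functions, and that only polynomially many such steps (in the input size) elapse between two consecutive outputs. Together these yield the polynomial-delay property.

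For the counting step, I would invoke Theorem \ref{theo:complex1} to compute $h(x)$, the short rational encoding of the full set of nondominated solutions of \ref{fip}, as a one-time preprocessing that is polynomial in the input size for fixed $n$. Each sub-hypercube $\mathcal{M}_i$ is a rational polytope whose generating function $r_{\mathcal{M}_i}(x)$ admits an explicit short rational form entirely analogous to the function $r((x,z),(R,M))$ constructed in the proof of Theorem \ref{theo:complex1}. Applying the Hadamard product $r_{\mathcal{M}_i}(x) \ast h(x)$ (Theorem 3.6 in \cite{barvinok03}) produces, in polynomial time, a short rational function whose monomials are exactly the nondominated solutions contained in $\mathcal{M}_i$; specializing this rational function at $x = 1$ via Barvinok's removable-singularity technique yields the integer count $n_{\mathcal{M}_i}$ in polynomial time. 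When $n_{\mathcal{M}_i} = 1$, the unique monomial can also be read off in polynomial time from the short rational form, which is how $x^\star$ is identified in the algorithm.

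For the delay analysis, I would bound the depth of the recursion tree. Starting from $\mathcal{M} = [0,L]^n$ and halving every edge at each level, after at most $\lceil \log_2 L\rceil$ subdivisions any sub-hypercube has edge length less than one and hence contains at most one lattice point, forcing the $n_{\mathcal{M}_i} \le 1$ branch and terminating that part of the recursion. Since $L$ is polynomial in the bit-length of the description of $P$, the depth of the recursion tree is $O(\log L)$, i.e. polynomial in the input size. The branching factor is $2^n$, which is a constant for fixed $n$. Between two consecutive outputs the algorithm performs a depth-first traversal that descends at most $O(\log L)$ levels and, on the way back up, visits at most $2^n - 1$ sibling cubes per level; at each visited cube it executes a constant number of Hadamard products, projections and specializations, all of which run in polynomial time by the argument above. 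Therefore the computation time between two consecutive solutions is bounded by $2^n \cdot O(\log L)$ times the polynomial cost of the SGF operations, which is polynomial in the input size.

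The main obstacle I expect is the delicate bookkeeping in the delay argument: one has to argue that repeated backtracking through empty sub-hypercubes and the re-computation of short generating functions for newly created cubes cannot inflate the delay beyond polynomial. This comes down to observing (a) that the short rational form of $r_{\mathcal{M}_i}(x)$ depends only on the two rational endpoints of the cube and is therefore writable on the fly in constant size, and (b) that every cube visited between two outputs lies on the O(\log L)-length path in the recursion tree connecting them. Correctness — i.e., that the disjoint cubes produced by the halving partition the search region so that no nondominated solution is missed and none is output twice — is immediate from the partition structure of the subdivision.
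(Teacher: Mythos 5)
Your proposal is correct and follows essentially the same route as the paper's proof: precompute the short generating function $h(x)$ of the nondominated set, count solutions in each sub-hypercube via the Hadamard product $r_{\mathcal{M}_i}(x)\ast h(x)$, and bound the delay by the $O(\log L)$ depth of the binary subdivision tree times the constant ($2^n$ for fixed $n$) branching factor times the polynomial cost of each counting operation. One small slip: $L$ itself need not be polynomial in the bit-length of the description of $P$ (it can be exponential); what makes the argument work is that $\log L$ is linear in the encoding length, which is exactly the quantity appearing in your depth bound, so the conclusion stands.
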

\begin{proof}
Let consider the multiobjective transformation of \ref{fip} in \eqref{crisp3}.

By definition, $P \subseteq  [0,L]^n$. Let $h(x)$ denote the short generating function encoding the nondominated solutions of \eqref{crisp3}, $r_\mathcal{H}(x)=\dprod_{i=1}^n\big[\dfrac{x_i^{m_i}}{1-x_i} +
\dfrac{x_i^{M_i}}{1-x_i^{-1}}\big]$ be the short generating function of the hypercube $\mathcal{H}
= \dprod_{i=1}^n [m_i, M_i] \subseteq \R^n$, with $m_i, M_i \in
\Q$ for $i=1, \ldots, n$ and $\texttt{counting\underline{ }integer(}\mathcal{H}\texttt{)}$ denote the number of operations needed to count the integer points encoded in $r_\mathcal{H}\ast h$ (this number is polynomially bounded, when the dimension is fixed by Theorem 1.2 in \cite{barvinok94}).

The algorithm proceeds on a recursive subdivision of hypercubes (subhypercubes). Starting from the original hypercube $[0,L]^n$, we subdivide it in $2^n$ subhypercubes testing whether they contain nondominated solutions (this process is done using the short generating function of the corresponding hypercube, $r_\mathcal{H}$, and $h(x)$).

The subhypercubes that do not contain nondominated solutions are discarded from consideration.

Each subhypercube that contains at least one nondominated solution is subdivided further until we are led to a family of elements in this subdivision so that each one contains either a unique nondominated solution or it does not contain any nondominated solution (this search is done using a depth-first search, see \cite{swamy81}). Each nondominated solution is added to $ND$, our current set of solutions. Then, all this family of subhypercubes is removed from further consideration.

The algorithm repeats this scheme with all the elements in the subdivision until all of them have been processed: either fathomed or considered because they contain a nondominated solution.

The entire process is polynomial delay because we are searching in a binary tree with at most $nlog(L)$ levels (due to the binary subdivision of the hypercube $[0,L]^n$). At each node the algorithm checks in polynomial time, whether the branches at this node contain at least one nondominated solution using the test given by $\texttt{counting\underline{ }integer(}\mathcal{H}\texttt{)}$. Therefore, the algorithm only processes those branches where it is ensured a final success (i.e. a nondominated solution will be found). Hence, from the last solution found, the overall number of operations until a next solution is found or finding a certificate of termination is bounded above by $O(nlog(L)) \times \texttt{counting\underline{ }integer(}\mathcal{H}\texttt{)}$.
\end{proof}
\begin{remark}
One may think of using a different approach to enumerate the entire set of
solutions of \ref{fip} that may lead to a simpler polynomial delay algorithm. The
idea would be to use the equivalence between \ref{fip} and solving a series of
$M+1$ crisp IP (see the equivalence above). In fix dimension, it is known that
solving each of these problems is polynomially doable \cite{lenstra81}. Then from
one solution to the next one, generated in this way, the method would need a
polynomial number of operations and therefore there would be, at most, a
polynomial delay between two consecutive solutions found.  However, this simple
method does not guarantee the complete enumeration of the set of solution of
\ref{fip} since each of these IP may have multiple optima. The reader may note
that to ensure the entire enumeration of the set of optimal solutions the method
would need to find \textit{all}  the alternative optima of each integer problem and
this process is equivalent to enumeration of integer points in polyhedra  which
would lead us again to the starting point.
\end{remark}
The following example illustrates a strategy to parametrically fix the membership function of the inequalities in \ref{fip}.
\begin{example}
\label{ex:hyperplanes}
Let $A \in \Z^{m\times n}$ (with rows $a_1, \ldots, a_m$) and $b = (b_1, \ldots, b_m) \in \Z^m$ and
$$
f_i(x) = \dfrac{p_i}{q_i} \left(b_i - \dsum_{j=1}^n a_{ij}\,x_j\right)
$$
for $x=(x_1, \ldots, x_n)$ and $p_i, q_i \in \Z_+$ for $i=1, \ldots, m$.

%Then, the hyperplane of $\R^{n+1}$ given by $x_{n+1}-f_i(x)=0$ is the hyperplane that contains the $i$-th face of $\{x \in \R^{n}: Ax \leq b\}$ (embedded in the hyperplane of $x_{n+1}=0$) and that forms a angle $\sqrt{\arcsin(\lambda_i)}$ with the hyperplane $x_{n+1}=0$.

Then, the hyperplane of $\R^{n+1}$ given by $x_{n+1}-f_i(x)=0$ is the hyperplane that contains the $i$-th face of $\{x \in \R^{n}: Ax \leq b\}$ (embedded in the hyperplane of $x_{n+1}=0$) and that forms an angle $\arcsin\left(\sqrt{\frac{p_i}{q_i}\sqrt{\dsum_{i=1}^n a_{ij}^2}}\right) \in [0, \frac{\pi}{2}]$ with the hyperplane $x_{n+1}=0$.

Defining $f(x) = \min_i f_i(x)$ we have that for each $x \in \R^n$:
\begin{enumerate}
\item If $Ax \leq b$, then $f(x)\geq 0$. And $f(x)=0$ if and only if $Ax=b$.
\item If there exists any $i \in \{1,  \ldots, m\}$ such that $a_ix >b$, then $f(x)<0$.
\end{enumerate}

Then, defining
$$
\mu(x) = \left\{\begin{array}{ll}
1 & \mbox{if $f(x)\leq 0$}\\
1+f(x) & \mbox{if $-1 \leq f(x) < 0$}\\
0 & \mbox{if $f(x) <-1$}
\end{array}\right.
$$

clearly $\mu$ is a normal membership function modeling the satisfaction of the system of inequalities $Ax \leq b$.

In order to minimize as in Problem \ref{crisp0}, we can assume that $\mu(x)$ is basically $1+f(x)$, with constraints $-1\leq f(x) \leq 0$ since in the points inside the polytope $\{x \in \R^n: Ax \leq b\}$ are catched by those that maximize $1+f(x)$ (or equivalently, those that maximize $f(x)$) and we are not interested in those points that have $\mu(x)=0$, so we can obviate the third case in the definition of $\mu(x)$.
\end{example}

\begin{example}[\cite{herrera-verdegay95}]
\label{ex:1}
Consider the following problem:
\begin{equation}
\label{example1}
\begin{array}{lrl}
\max & \;  2\,x_1+5\,x_2& \\
s.t.&  2x_1 -x_2 &\lesssim 9\\
&  2x_1 + 8x_2 &\lesssim 31\\
 & x_1, x_2&\in \mathbb{Z}_+
 \end{array}
 \end{equation}
 We use the membership functions proposed in Example \ref{ex:hyperplanes} with $p_1=p_2=1$ and $q_1=3$, $q_2=4$ (using the same membership functions as in \cite{herrera-verdegay95}). Figure \ref{fig:example} shows the crisp polytope and the area where the membership function is not zero.
 \begin{figure}[H]
 \begin{center}
 \includegraphics[scale=0.5]{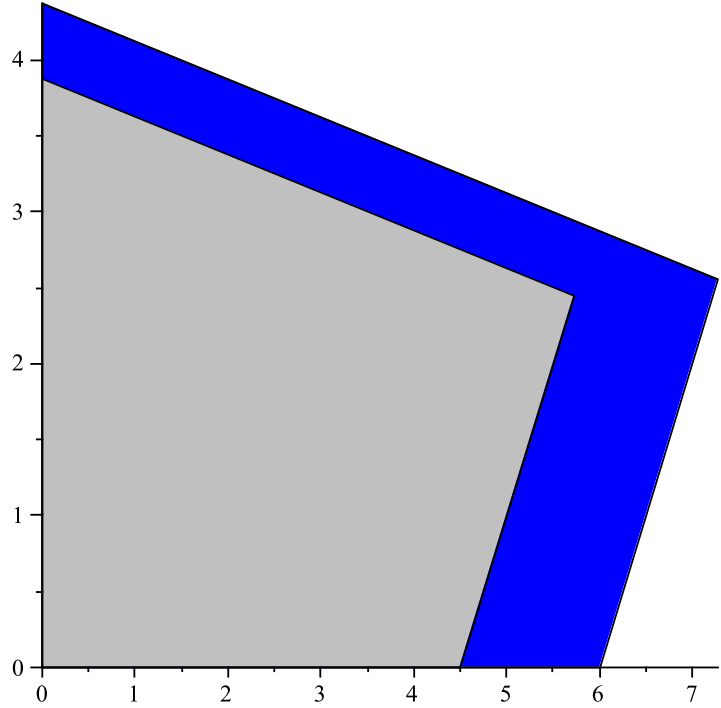}
 \caption{The crisp polytope ($P$)  and its maximum deformation by the membership function ($\widetilde{P}$) of Example \ref{ex:1}\label{fig:example}.}
 \end{center}
 \end{figure}
  Transforming the problem as in \eqref{crisp2} we obtain the following biobjective mixed-integer problem:
 \begin{equation}
\label{ex:crisp2}
\begin{array}{lrl}
\max & \;  (2\,x_1+5\,x_2, z) \\
s.t. & z \leq \frac{12-2x_1+x_2}{3}&\\
 & z \leq \frac{35-2x_1-8x_2}{4}&\\
 &x_1, x_2\in \mathbb{Z}_+\\
 &z \in [0, 1]
 \end{array}
\end{equation}
Figure \ref{fig:3d} shows the feasible region of the above biobjective problem. The bootom of that polytope ($z=0$) coincides with the embedding of the crisp original polytope and in the top ($z=1$) appears the maximum deformation of the crisp polytope by the fuzzyfication.
\begin{figure}[H]
\begin{center}
\includegraphics[scale=0.6]{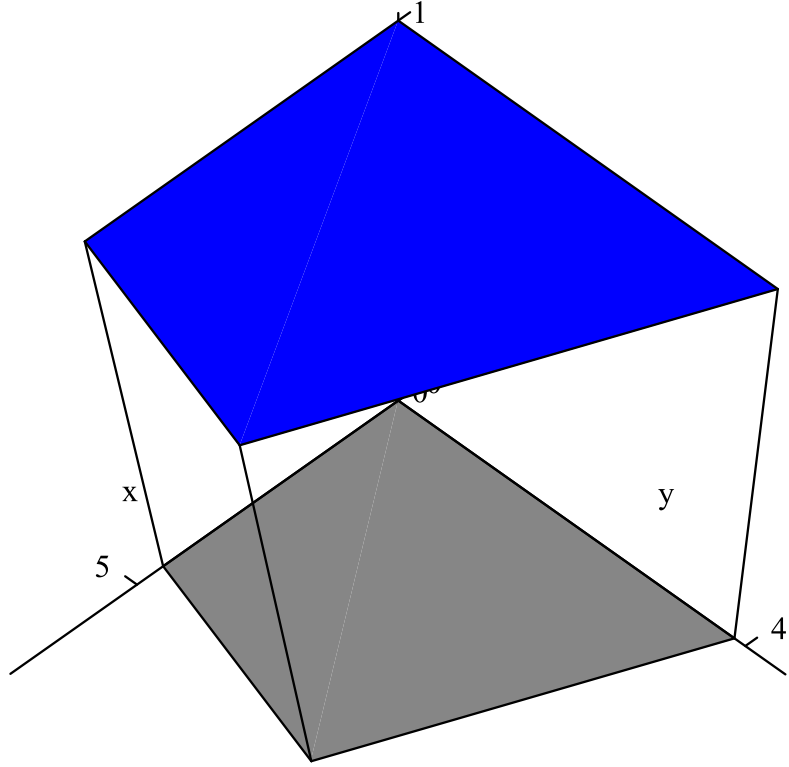}
\end{center}
\caption{Feasible polytope of Problem \ref{ex:crisp2}\label{fig:3d}}
\end{figure}
Now, the least common multiple of all the denominators that appear in the inverses of all subdeterminants of the matrix defined by the membership functions is $M=12$, then, the problem above is equivalent to the biobjective integer problem:
 \begin{equation}
\label{ex:crisp3}
\begin{array}{lrl}
\max & \;  f(x_1, x_2, y):= (2\,x_1+5\,x_2, y) \\
s.t. & y \leq 4\,(12-2x_1+x_2)&\\
 & y \leq 3\,(35-2x_1-8x_2)&\\
 &x_1, x_2\in \mathbb{Z}_+\\
 &y \in [0, 12]\cap \Z
 \end{array}
\end{equation}
The nondominated solutions of Problem \eqref{ex:crisp3} are $x_1^*=5, x_2^*=3, y^* =3$ ($f^*=(25, 3)$), $x_1^*=4, x_2^*=3, y^*=9$ ($f^*=(23, 9)$) and $x_1^*=3, x_2^*=3, y^*=12$ ($f^*=(21,12)$). Then, the set of solutions of the fuzzy problem, with its respective membership values (obtained dividing $y$ by $M$) are:
$$
\{[(5,3)/0.25], [(4,3)/0.75], [(3,3)/1]\}
$$
\end{example}

\section{Fuzzy integer programs with fuzzy objective coefficients}
\label{sec:fobj}
In this section we deal with linear integer problems where the coefficients of the objective functions are fuzzy numbers:
\begin{equation}
\label{fip2}
\begin{array}{lrl}
\max & \;  \tilde{c}\,x& \\
s.t.& x\in P \cap \mathbb{Z}_+^n \i\tag{$\mbox{\it FIP}_{A,\tilde{c}}(b)$}
 \end{array}
 \end{equation}
 where $P$ is a rational polytope and $\tilde{c} = (\tilde{c}_1, \ldots, \tilde{c}_n)$ is a vector of fuzzy numbers.

First of all, we describe an equivalent way to handle fuzzy numbers that will be useful to tackle \ref{fip2}. Let $\tilde{c}$ be a real fuzzy number, for each $\alpha \in [0,1]$ the $\alpha$-cut of $\tilde{c}$ is the set
$$
\tilde{c}_\alpha = \{ c \in \R: \mu_{\tilde{c}} \geq \alpha\}.
$$
It is clear that $\tilde{c}$ is totally determined by its set of $\alpha$-cuts for $\alpha\in[0,1]$ since the membership function that determines $\tilde{c}$ can be identified with this family of sets. Actually, the expression for the membership function, given the set of $\alpha$-cuts is:
$$
\mu_{\tilde{c}}(x) = \sup_{\alpha \in (0,1]} \min\{\alpha, \chi_{\tilde{c}_\alpha}(x)\}
$$
where $\chi_{\tilde{c}_\alpha}$ is the characteristic function of the $\alpha$-cut of $\tilde{c}$.

Furthermore, for each $\alpha \in (0,1]$, $\tilde{c}_\alpha$ is a closed interval in $\R$:
$$
\tilde{c}_\alpha = [c_1^\alpha, c_2^\alpha] \quad \alpha \in (0,1]
$$
Although in general, it is necessary to have the complete set of intervals that describes the $\alpha$-cuts, in many well-known families of fuzzy numbers, we can describe the fuzzy number just giving a finite subset of $\alpha$-cuts. For instance, interval fuzzy numbers are totally described using one $\alpha$-cut and triangular or trapezoidal using exactly two $\alpha$-cuts.

The objective function of our problem, \ref{fip2}, is $\tilde{c}x = \dsum_{i=1}^n \tilde{c}_i x_i$ that is a fuzzy number. Let $[c_{i1}^\alpha, c_{i2}^\alpha]$ the $\alpha$-cut for $\tilde{c}_i$, $\alpha\in(0,1]$, $i=1, \ldots, n$. Then, using the addition and multiplication of a fuzzy number by an ordinary number (see   \cite{kaufmann-gupta91}) , the $\alpha$-cut for $\tilde{c}x$ is given by:
  $$
  (\tilde{c} x)_\alpha = [\dsum_{i=1}^n c_{i1}^\alpha\,x_i, \dsum_{i=1}^n c_{i2}^\alpha\,x_i]
  $$

Now, evaluating a feasible solution $x$ in the objective function means to compute a fuzzy number or equivalently, its $\alpha$-cuts. To compare two feasible solutions $x$ and $y$, we have to compare fuzzy numbers (that are actually functions), but orderings defined over functions are not total, but partial (it may exists functions that are not comparable). However, by means of $\alpha$ cuts, we can compare both fuzzy numbers (the evaluation by the objective function in $x$ and $y$) using the equivalent way to treat them.

Let $\tilde{c}$ an $\tilde{d}$ be two real fuzzy numbers, we say that $\tilde{c} \leq \tilde{d}$ if $c_1^\alpha \leq d_1^\alpha$ and $c_2^\alpha \leq d_2^\alpha$ for all $\alpha \in (0,1]$. Then, to compare two fuzzy numbers by this partial ordering we just need the extreme points of each of its $\alpha$-cuts intervals, i.e., to compare $\tilde{c}$ and $\tilde{d}$ we only need to compare by the componentwise order in $\R^2$ the set of vectors $\{(c_1^\alpha, c_2^\alpha): \alpha \in (0,1]\}$ and $\{(d_1^\alpha, d_2^\alpha): \alpha \in (0,1]\}$ for each $\alpha$-cut.

For our particular case, for two feasible solutions, $x$ and $y$, we need to compare $\dsum_{i=1}^n c_{i1}^\alpha\,x_i$ and $\dsum_{i=1}^n c_{i1}^\alpha\,y_i$, and $\dsum_{i=1}^n c_{i2}^\alpha\,x_i$ and $\dsum_{i=1}^n c_{i2}^\alpha\,y_i$.

Then, we can transform \ref{fip2} to a continuum family of biobjective problems:
\begin{equation}
\label{tfip2}
\begin{array}{lrl}
\max & \;  (c_1^\alpha\,x, c_2^\alpha\,x)& \\
s.t.& x\in P \cap \mathbb{Z}_+^n \i\tag{$\mbox{\it FIP}^\alpha_{A,\tilde{c}}(b)$}
 \end{array}
 \end{equation}
 for each $\alpha \in (0,1]$ and where $c_1^\alpha= (c_{11}^\alpha, \ldots, c_{1n}^\alpha)$ and $c_2^\alpha= (c_{21}^\alpha, \ldots, c_{2n}^\alpha)$ are the lower and upper extremes of the $\alpha$-cuts of $\tilde{c}$, respectively.

 The set of solutions of these problems are the points $x \in P\cap \Z^n_+$ such that there is no $y \in P \cap \Z^n_+$ with $c_1^\alpha x \leq c_1^\alpha y$ and $c_2^\alpha x \leq c_2^\alpha y$ for all $\alpha \in (0,1]$.

In \cite{gonzalez-vila91} the authors propose a way to reduce this continuum (in $\alpha$) family of problems to a discrete one. Let us consider $\alpha_1, \ldots, \alpha_k \in (0,1]$, that we call \textit{ranking system}. Then, we can assume that instead of considering the entire interval $(0,1]$ we consider only a representative set of elements of this interval. In practice, many problems are completely determined by a finite subset of $\alpha$-cuts, so we are not loosing information assuming this ``discretization''. In those cases, where all the $\alpha$-cuts are needed, we can consider an approximation to the corresponding fuzzy numbers with as many elements (finite) in the ranking system as we want.

The transformation above is exact at the nodes $\alpha_i$ of the representation and it has some global errors on $[0, 1]$. Further, it is easy to control the error by introducing additional nodes into the representation or by using a sufficiently high number
of nodes with $\dmax_i\{\alpha_i-\alpha_{i-1}\}$ sufficiently small. To control the error of the
approximation, we can proceed by increasing the number $k + 1$ of elements in the ranking system; a
possible strategy is to double the number of points by using $k = 2^s$ and by
moving automatically to $k = 2^{s+1}$ if a better precision is necessary.

In the standard models, a finite and pre-specified ranking system describes exactly the fuzzy numbers. For instance, trapezoidal fuzzy numbers (including triangular fuzzy number as an special case) is totally characterized by two elements in the ranking system $\{\alpha_0, 1\}$. In general it is also true for piecewise linear fuzzy numbers where the ranking system is $\{\alpha_1, \ldots, \alpha_k=1\}$, being $\alpha_i$ each one of the vertices of the polygonal that gives the membership function. This ranking system describes completely the fuzzy number.

The following example illustrates this idea.
\begin{example}
Let $\tilde{x}$ be a trapezoidal fuzzy number with membership function given by:
$$
\mu_{\tilde{x}}(z) = \left\{\begin{array}{ll} 0 & \mbox{if $z < a_1$}\\
\frac{z-a_1}{a_2-a_1} & \mbox{if $a_1\leq z \leq a_2$}\\
1 & \mbox{if $a_2\leq z \leq a_3$}\\
\frac{a_4-z}{a_4-a_3} & \mbox{if $a_3\leq z \leq a_4$}\\
 0 & \mbox{if $x > a_4$}
 \end{array}\right.
 $$
\begin{figure}[H]
 \begin{center}
\includegraphics[width=7cm, height=5.5cm]{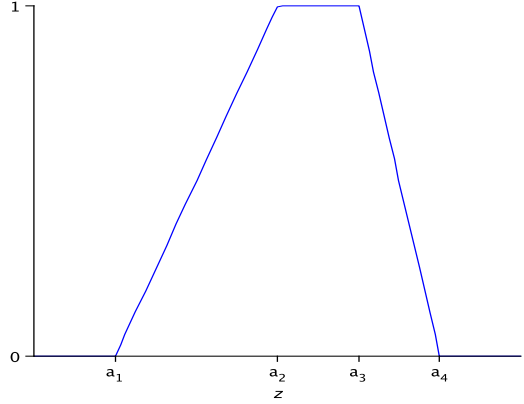}
\caption{Trapezoidal membership function.\label{fig:trapez}}
\end{center}
\end{figure}
Figure  \ref{fig:trapez} shows the membership function of a trapezoidal fuzzy number. Figure \ref{fig:trapez2} shows the $\alpha_0$-cut and the $1$-cut for some $\alpha_0 \in (0,1)$. From the $1$-cut, the elements $a_2$ and $a_3$ of the fuzzy number are determined. From the $\alpha_0$-cut, $[l_1, l_2]$, the equation of the line that pass through the points $(l_1, \alpha_0)$ and $(a_2, 1)$ intersects with the $x$-axis in $(a_1,0)$ and the line that pass through $(l2, \alpha_0)$ and $(a_3, 1)$ intersects with the $x$-axis in $(a_4, 0)$. Then, we have completely determined the fuzzy number.
\begin{figure}[H]
\begin{center}
\begin{tabular}{cc}
\includegraphics[width=6cm, height=5cm]{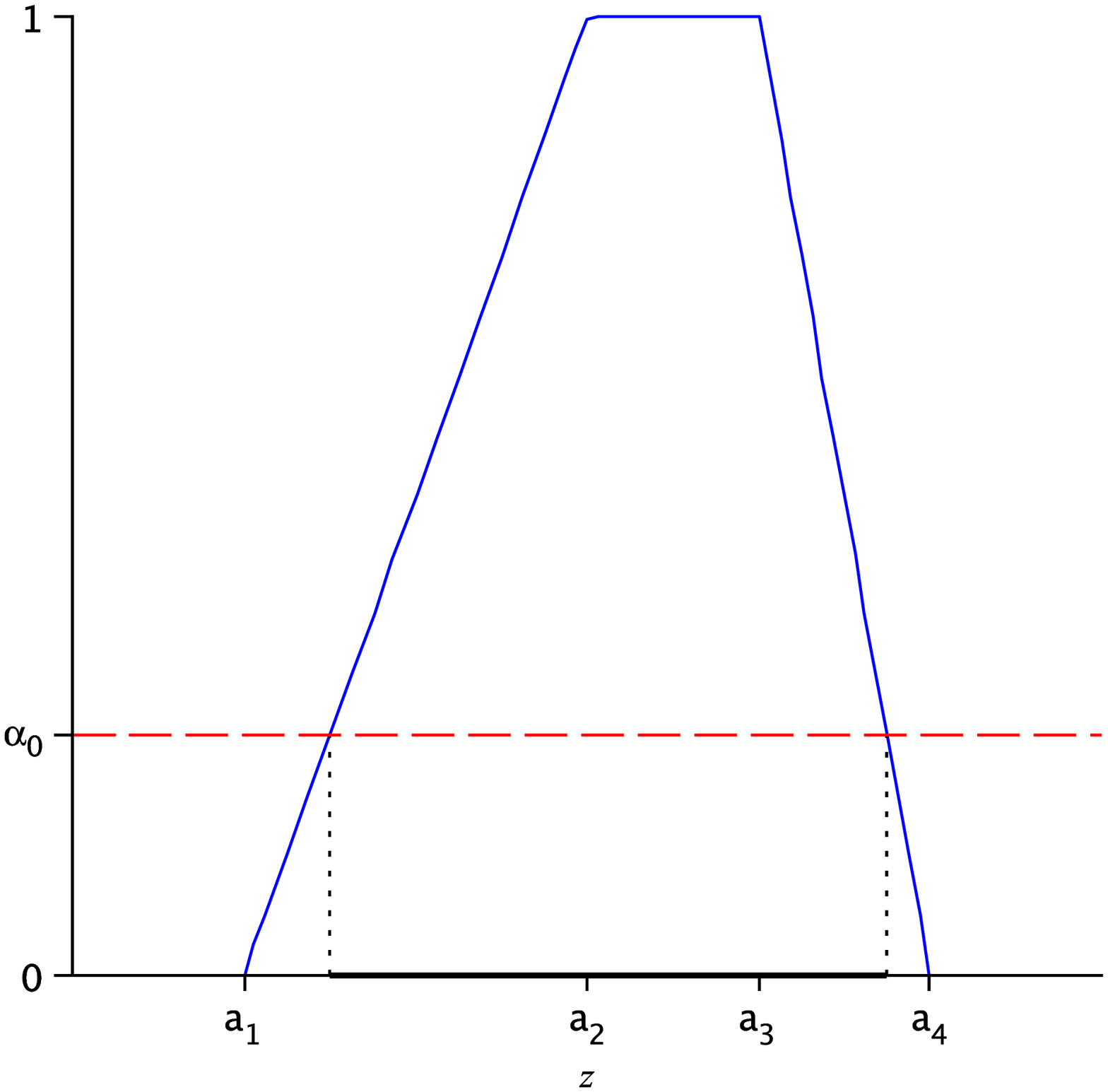} & \includegraphics[width=6cm, height=5cm]{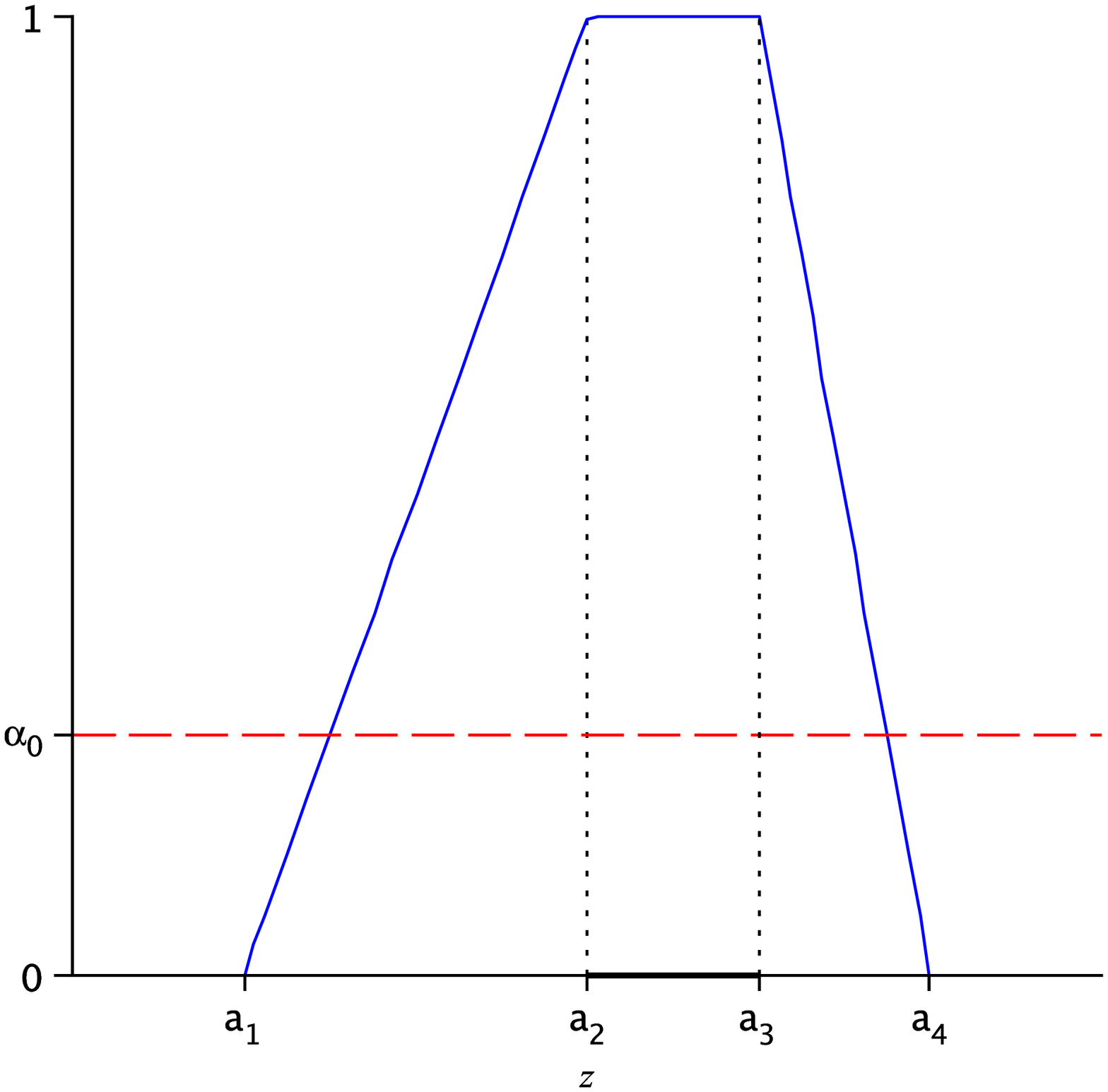}
\end{tabular}
\caption{$\alpha_0$-cut and $1$-cut for a trapezoidal fuzzy number.\label{fig:trapez2}}
\end{center}
\end{figure}
\end{example}
The following result states the complexity of encoding the solutions of \ref{fip2} in a short generating functions when the membership functions involved in the problem are piecewise linear functions (or equivalently, that the problem is totally described by a finite ranking system).
\begin{theorem}
We can encode, in polynomial time, the entire set of solutions for
\ref{fip2} in a short sum of rational functions, when a finite ranking system describes the fuzzy numbers involved in the problem.
\end{theorem}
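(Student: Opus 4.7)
The plan is to reduce \ref{fip2} to a single multi-objective \emph{crisp} integer linear program in fixed dimension and then invoke Theorem \ref{theo:nd} directly.

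First, I would exploit the ranking-system hypothesis to discretize the fuzzy objective. Let $\{\alpha_1,\ldots,\alpha_k\}\subseteq(0,1]$ be a finite ranking system that completely describes each fuzzy coefficient $\tilde{c}_i$ through its $\alpha_j$-cuts $[c_{i1}^{\alpha_j},c_{i2}^{\alpha_j}]$, $j=1,\ldots,k$. Using the $\alpha$-cut arithmetic recalled earlier in the section, the $\alpha_j$-cut of $\tilde{c}\,x$ is
$$
(\tilde{c}\,x)_{\alpha_j} \;=\; \bigl[\,c_1^{\alpha_j}\,x,\;c_2^{\alpha_j}\,x\,\bigr],
$$
with $c_1^{\alpha_j}=(c_{11}^{\alpha_j},\ldots,c_{1n}^{\alpha_j})$ and $c_2^{\alpha_j}=(c_{21}^{\alpha_j},\ldots,c_{2n}^{\alpha_j})$. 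Hence the fuzzy number $\tilde{c}\,x$ is fully encoded by the $2k$ linear functionals $\{c_1^{\alpha_j}x,\,c_2^{\alpha_j}x\}_{j=1}^k$; after clearing denominators (which are rational by assumption and of size polynomial in the input), these become integer linear functionals.

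Next, I would translate the $\alpha$-cut partial order on fuzzy numbers into the componentwise order on $\Z^{2k}$. By the definition given in the paper, $\tilde{c}\,x\leq \tilde{c}\,y$ holds in the fuzzy sense on the discretization exactly when $c_1^{\alpha_j}x\leq c_1^{\alpha_j}y$ and $c_2^{\alpha_j}x\leq c_2^{\alpha_j}y$ for every $j=1,\ldots,k$. Consequently the set of solutions of \ref{fip2} coincides with the set of nondominated solutions of
\begin{equation*}
\max\;\bigl(c_1^{\alpha_1}x,\,c_2^{\alpha_1}x,\,\ldots,\,c_1^{\alpha_k}x,\,c_2^{\alpha_k}x\bigr)
\quad\text{s.t.}\quad x\in P\cap\Z^n_+,
\end{equation*}
which is an instance of $\mbox{\it MOILP}_{A,C}$ with $C\in\Z^{2k\times n}$. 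Applying Theorem \ref{theo:nd} to this MOILP in fixed dimension $n$ then produces, in polynomial time, a short rational generating function whose monomials are exactly the nondominated solutions, i.e., the entire solution set of \ref{fip2}.

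The main (and essentially only) obstacle is the equivalence, on the discretization $\{\alpha_1,\ldots,\alpha_k\}$, between the fuzzy partial order on $\tilde{c}\,x$ and the componentwise order in $\R^{2k}$. This is exactly where the ranking-system hypothesis does its work: because the fuzzy numbers involved are fully reconstructible from their $k$ cuts (as is the case for trapezoidal, triangular, and more generally piecewise-linear membership functions), agreement on the $\alpha_j$'s implies agreement for all $\alpha\in(0,1]$, so no information is lost in passing from the continuum family \ref{tfip2} to the finite system above. Once this equivalence is in place, Theorem \ref{theo:nd} closes the argument.
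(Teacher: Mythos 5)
Your proposal is correct and follows essentially the same route as the paper's own proof: discretize via the finite ranking system, pass to the $2k$-objective crisp MOILP with objectives $(c_1^{\alpha_1}x, c_2^{\alpha_1}x, \ldots, c_1^{\alpha_k}x, c_2^{\alpha_k}x)$, and invoke Theorem \ref{theo:nd}. You actually supply more detail than the paper does (clearing denominators to obtain an integer matrix $C$, and justifying why the discretized componentwise order captures the fuzzy order), which only strengthens the argument.
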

\begin{proof}
Using the $\alpha$-cuts comparation of feasible solutions, \eqref{fip2} is transformed to the family of problems \eqref{tfip2}. By hypothesis, it is enough to consider a finite set in the ranking system, so the problem is equivalent to the following multiobjective integer problem:
\begin{equation}
\label{mofip}
\begin{array}{lrl}
\max & \;  (c_1^{\alpha_1}\,x, c_2^{\alpha_1}\,x, \ldots, c_1^{\alpha_k}\,x, c_2^{\alpha_k}\,x)& \\
s.t.& x\in P \cap \mathbb{Z}_+^n \i\tag{$\mbox{\it FIP}^\alpha_{A,\tilde{c}}(b)$}
 \end{array}
 \end{equation}
where the $\alpha_1, \ldots, \alpha_k$ is the ranking system.

Then, the result follows from Theorem \ref{theo:nd}.
\end{proof}
\begin{theorem}
\label{theo:complexfip2}
If the dimension is fixed, there exists a polynomial-delay algorithm for solving \ref{fip2}.
\end{theorem}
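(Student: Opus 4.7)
The plan is to mirror the strategy used earlier for \ref{fip}: first reduce \ref{fip2} to a standard multiobjective integer linear program, then encode its nondominated set in a short rational generating function, and finally enumerate these solutions by a binary subdivision of a bounding hypercube. Throughout, I work under the standing hypothesis of the previous theorem, namely that a finite ranking system $\alpha_1, \ldots, \alpha_k$ describes the fuzzy coefficients $\tilde{c}$, so that \ref{fip2} is equivalent to the $2k$-objective integer program \eqref{mofip}.

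First, applying Theorem \ref{theo:nd} to \eqref{mofip} produces, in polynomial time for fixed $n$, a short rational generating function $h(x)$ encoding the entire set of nondominated solutions. A key observation here is that Barvinok's machinery only requires $n$ to be fixed; the number $2k$ of objective functions enters the complexity of the construction in \cite{blanco09} but does not compromise fixed-dimension polynomiality, since the decision variables still live in $\R^n$ and the underlying polytope $P$ is unchanged.

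Second, I would replicate the binary-search enumeration of Algorithm \ref{alg:logM}. Choose $L = \max\{U, l^{-1}\}$ with $U$ and $l$ the largest and smallest coefficient appearing in the description of $P$, so that $P \subseteq [0,L]^n$. The procedure recursively subdivides $[0,L]^n$ into $2^n$ sub-hypercubes, and at each node tests whether a sub-hypercube $\mathcal{H}$ contains a nondominated solution by forming the Hadamard product $r_{\mathcal{H}}(x) \ast h(x)$ and counting the integer points it encodes, which is done in polynomial time by Theorem 1.2 in \cite{barvinok94}. Sub-hypercubes that encode no points are pruned, and sub-hypercubes reduced to singletons yield the next output. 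The polynomial-delay bound follows as in the proof for \ref{fip}: the search tree has depth at most $n\log(L)$, and along any root-to-output path at most $O(n\log(L))$ counting queries are performed, each polynomial in fixed dimension. Since pruning is exact, every branch the algorithm descends into is guaranteed to contain a nondominated solution, so between two consecutive outputs (or between the last output and the termination certificate) the work is polynomial.

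The main subtlety, and the point I would verify most carefully, is that the complexity of the binary-search step is not adversely affected by the jump from two to $2k$ objectives. The depth bound $n\log(L)$ depends only on the description of $P$ and not on $k$, and the short generating function $h(x)$ is still a function in the $n$ variables of the decision space. Consequently the Hadamard product $r_{\mathcal{H}} \ast h$ remains a generating function in $n$ variables, and the counting oracle retains its polynomial complexity in fixed $n$. Once this is checked, the polynomial-delay property transfers verbatim from the inequality-fuzzified case to \ref{fip2}.
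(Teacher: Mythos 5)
Your proposal is correct and follows essentially the same route as the paper: the paper's own proof is a one-line appeal to the transformation of \ref{fip2} into the multiobjective program \eqref{mofip} and a rerun of Algorithm \ref{alg:logM}, which is exactly what you spell out in detail. Your added observation that the increase from two to $2k$ objectives does not affect the fixed-dimension polynomiality (since the generating function still lives in the $n$ decision variables) is the right point to verify and is consistent with Theorem \ref{theo:nd}.
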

\begin{proof}
The existence of a polynomial-delay algorithm, similar to Algorithm 1, for \ref{fip2} follows from the transformation in Theorem \ref{theo:complexfip2}.
\end{proof}
In general, fuzzy numbers are not totally described by a finite ranking system (this is the case of general LR fuzzy numbers). Then, we propose here a approximated scheme to solve these problems.
Let us consider LR-fuzzy numbers, i.e., fuzzy numbers with the following type of membership function:
$$
\mu_{\tilde{x}}(z) = \left\{\begin{array}{ll} L(\frac{a_1-z}{a_1-a_0}) & \mbox{if $z < a_1$}\\
R(\frac{z-a_1}{a_2-a_1}) & \mbox{if $z \geq a_1$}
 \end{array}\right.
 $$
where $a_0 \leq a_1 \leq a_2$ and $L$ are such that $L(0) =R(0) = 1$ and $L$ and $R$ are strictly decreasing continuous functions
on $[0,1)$. A example of this type of fuzzy numbers is shown in Figure \ref{fig:LR}.
\begin{figure}[H]
\begin{center}
\includegraphics[width=7cm, height=5.5cm]{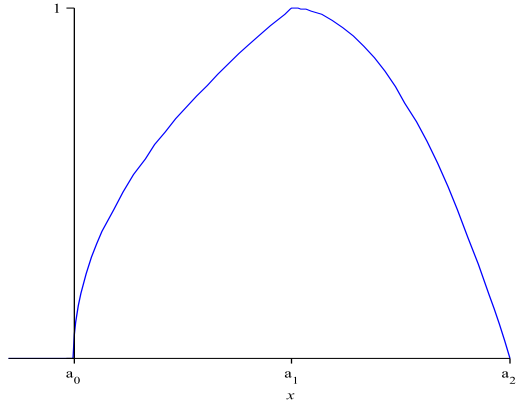}
\caption{LR membership function.\label{fig:LR}}
\end{center}
\end{figure}
Introducing as many elements as necessary in the ranking system, we can approximate the LR fuzzy number above by a continuous piecewise linear fuzzy number. Let $\alpha_i=\frac{1}{k}$ for some $k\in \N\setminus\{0\}$. Figure \ref{fig:LR_k} shows different choices for the number of elements in the system of generators (in the form $\alpha_i\frac{i}{k}$, with $k$ the number of elements).
\begin{figure}[H]
\begin{center}
\begin{tabular}{ccc}
\includegraphics[width=4cm, height=3cm]{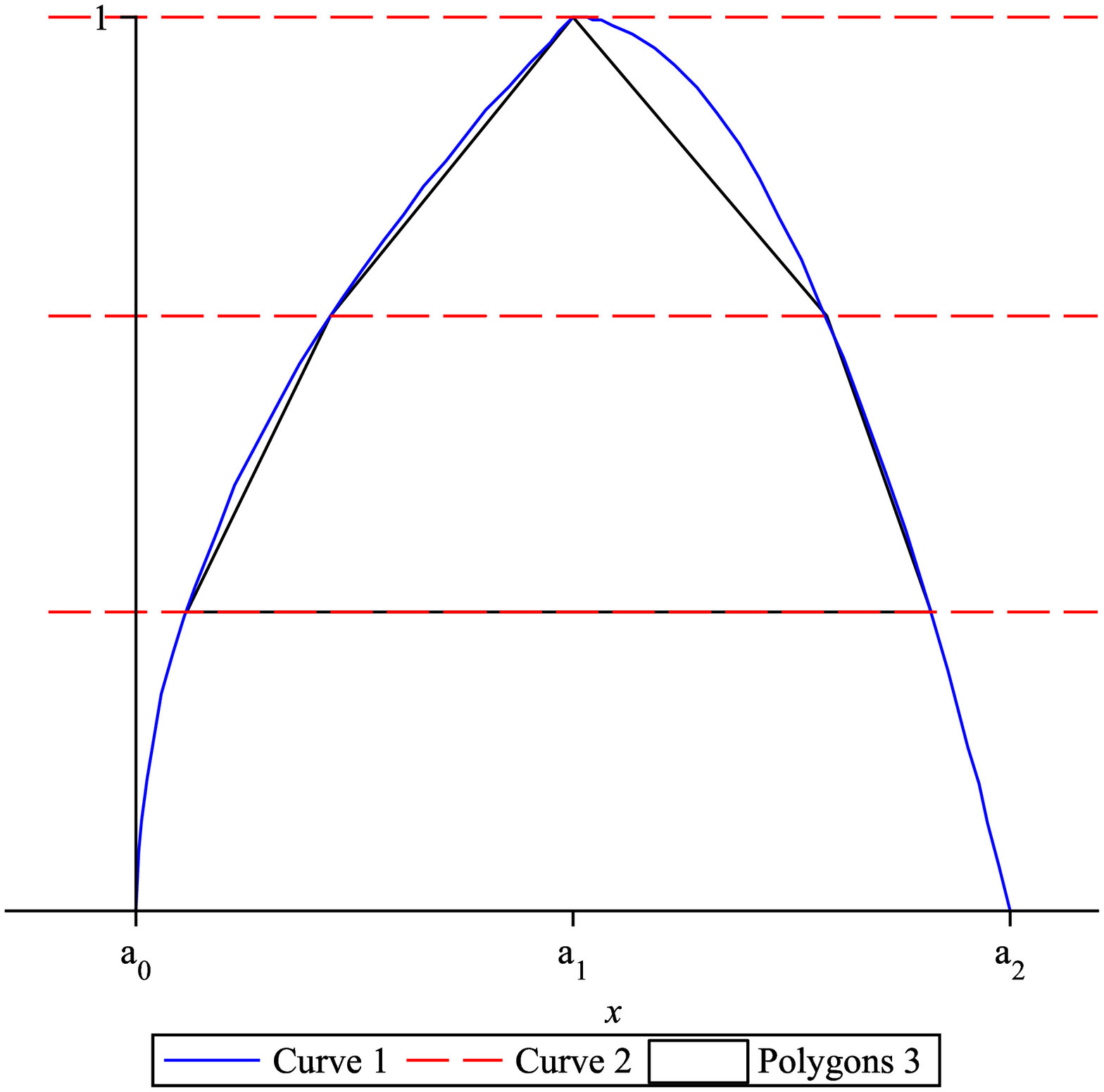} & \includegraphics[width=4cm, height=3cm]{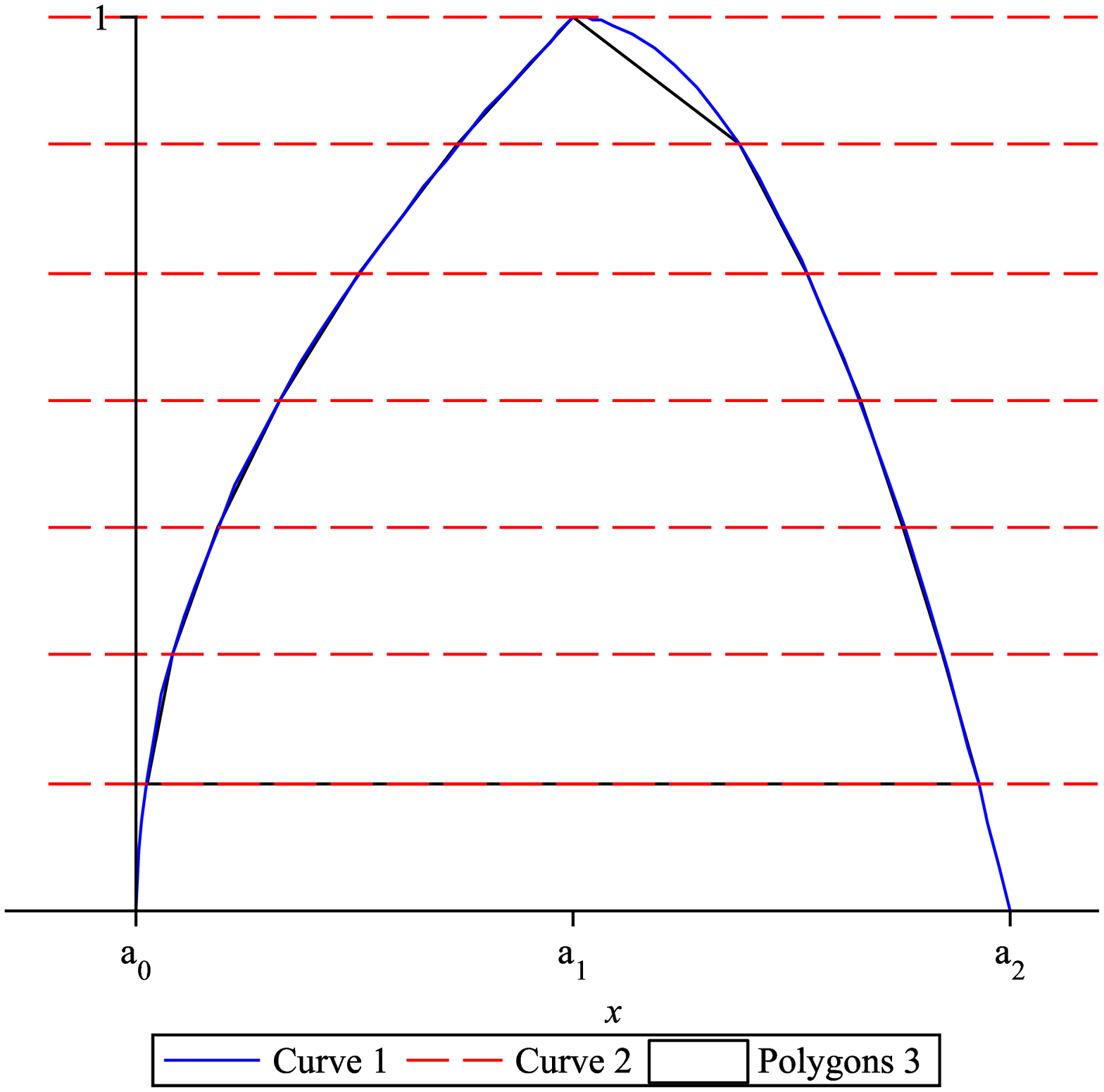} & \includegraphics[width=4cm, height=3cm]{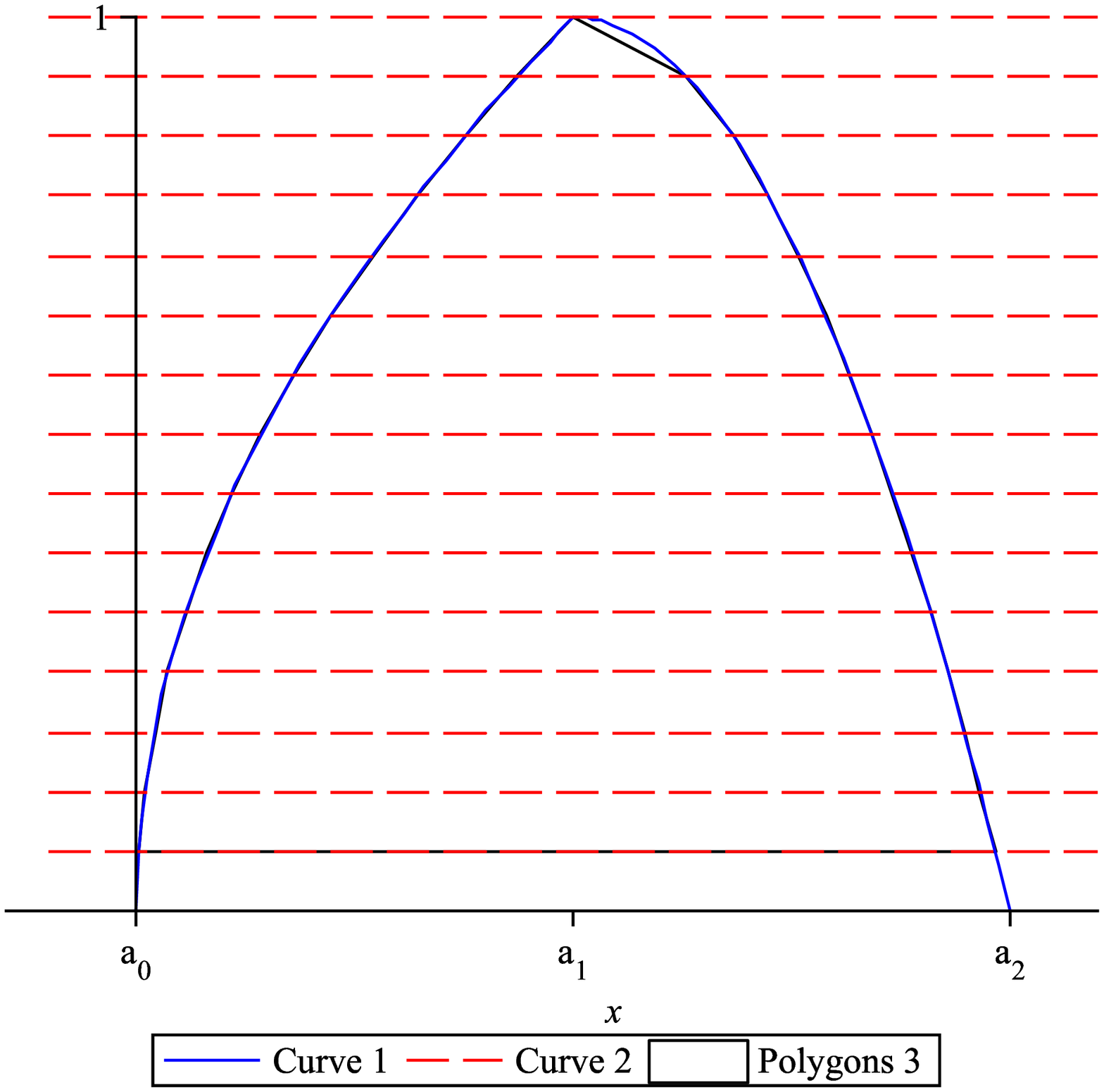}
\end{tabular}
\end{center}
\caption{Different choices for approximating a LR fuzzy number by a polygonal ($k = 3, 7, 15$).\label{fig:LR_k}}
\end{figure}
\begin{theorem}
We can encode in polynomial time, for fixed dimension, the entire set of solutions for the approximated \ref{mofip} in a short sum of rational functions.
\end{theorem}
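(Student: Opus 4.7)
The plan is to observe that the approximated version of \ref{fip2} is, by construction, an instance of the multiobjective program \eqref{mofip}, so the desired statement reduces to a direct application of the preceding theorem, and ultimately of Theorem \ref{theo:nd}.

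First, for each LR fuzzy coefficient $\tilde c_j$ with parameters $a_0^{(j)}\leq a_1^{(j)}\leq a_2^{(j)}$ and each ranking-system node $\alpha_i=i/k$, I would write the endpoints of the $\alpha_i$-cut of the piecewise-linear interpolant explicitly:
$$
c_{1j}^{\alpha_i}=a_1^{(j)}-(a_1^{(j)}-a_0^{(j)})\,L^{-1}(\alpha_i),\qquad
c_{2j}^{\alpha_i}=a_1^{(j)}+(a_2^{(j)}-a_1^{(j)})\,R^{-1}(\alpha_i).
$$
Because we are working with the approximated problem, the values $L^{-1}(\alpha_i), R^{-1}(\alpha_i)\in\Q$ have polynomial bit-length (using rational approximations of the exact LR-values to the precision dictated by the input whenever $L$ or $R$ fails to output rationals; this does not alter the polygonal structure of the approximation). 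Clearing denominators yields $2k$ integer coefficient vectors $c_1^{\alpha_i}, c_2^{\alpha_i}\in\Z^n$, for $i=1,\ldots,k$.

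Second, I would assemble the MOILP \eqref{mofip} with these $2k$ linear objectives over $P\cap\Z_+^n$. The $\alpha$-cut comparison introduced earlier shows that a feasible $x\in P\cap\Z_+^n$ is fuzzy-nondominated for the approximated \ref{fip2} if and only if it is nondominated for this MOILP. Invoking Theorem \ref{theo:nd}, the entire set of nondominated solutions can be encoded as a short sum of rational functions in polynomial time, which is precisely the claim.

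The only step requiring genuine care is the verification that the size of the constructed MOILP is polynomial in the original input. The number of objectives is $2k$, and since $k$ parametrises the approximation and is given as part of the input, this is polynomial by hypothesis; likewise the bit-sizes of the cut-endpoint vectors are controlled by the prescribed precision of the LR-value rationalisation. I do not anticipate any real obstacle here: the whole argument is a reduction to Theorem \ref{theo:nd}, the content being the verification that the polygonal approximation yields an input-polynomial MOILP to which that theorem applies.
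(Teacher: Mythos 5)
Your proposal is correct and follows exactly the route the paper intends: the paper in fact states this theorem without any written proof, relying on the preceding discussion that the polygonal approximation is described by a finite ranking system, so that the problem becomes the MOILP \eqref{mofip} and Theorem \ref{theo:nd} applies. Your version merely fills in the (welcome) details on the explicit $\alpha$-cut endpoints, their rationality, and the polynomial size of the resulting instance, all consistent with the paper's remark that the complexity of Theorem \ref{theo:nd} does not depend on the number of objectives.
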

\begin{theorem}
If the dimension is fixed, there exist a polynomial-delay algorithm for solving the approximated problem \ref{mofip}.
\end{theorem}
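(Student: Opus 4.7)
The plan is to reduce directly to the binary-search scheme of Algorithm \ref{alg:logM}, using as input the short generating function provided by the preceding theorem. By that theorem, for fixed dimension and a fixed ranking system $\{\alpha_1,\ldots,\alpha_k\}$, we can compute in polynomial time a short rational generating function $h(x)$ that encodes the entire set of nondominated solutions of the approximated \ref{mofip} viewed as a $2k$-objective integer program $\max(c_1^{\alpha_1}x,c_2^{\alpha_1}x,\ldots,c_1^{\alpha_k}x,c_2^{\alpha_k}x)$ over $P\cap\Z^n_+$.

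First I would fix the bounding hypercube exactly as in the proof of Theorem \ref{theo:complexfip2}: since the feasibility polytope is unchanged, I set $L=\max\{U,l^{-1}\}$ where $U$ and $l$ are the largest and smallest magnitudes appearing in the description of $P$, so that every feasible $x$ lies in $[0,L]^n$. Then I run Algorithm \ref{alg:logM} verbatim with $h(x)$ in the role played there by the generating function of the nondominated set. At each node of the binary subdivision tree, I form the Hadamard product $r_{\mathcal{H}}(x)\ast h(x)$ with the short generating function of the current subhypercube $\mathcal{H}$ and call Barvinok's counting routine (Theorem 1.2 in \cite{barvinok94}) on the result. Subhypercubes whose count is zero are pruned, those whose count is one emit the unique solution they encode, and those whose count is at least two are subdivided further in a depth-first manner.

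The depth of the search tree is $O(n\log L)$ by the same halving argument used before, and each node performs only a constant number of short-rational-function operations, all polynomial in the input size when the dimension $n$ is fixed. Consequently the number of arithmetic operations between two consecutive outputs, or between the last output and the termination certificate, is bounded by a polynomial in the input size, which is exactly the polynomial-delay property.

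The only technical point that is genuinely different from the proof of Theorem \ref{theo:complexfip2} concerns the role of the ranking-system size $k$: because the objective-space dimension is $2k$, one must check that the construction of $h(x)$ via Theorem \ref{theo:nd} remains polynomial in $k$ as well as in the remaining input; this is ensured by the hypothesis that the ranking system is finite and supplied as part of the input together with the fuzzy data. Once $h(x)$ is available, the subdivision step introduces no new difficulty, so the main obstacle is bookkeeping rather than a new algorithmic idea, and the conclusion follows by an argument strictly parallel to that of Theorem \ref{theo:complexfip2}.
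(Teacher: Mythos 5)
Your proposal is correct and follows exactly the route the paper intends: the paper itself states this theorem without an explicit proof, but its proofs of the parallel results (the polynomial-delay theorem for \ref{fip2} and Theorem \ref{theo:complexfip2}) consist precisely of transforming to the $2k$-objective crisp program, encoding the nondominated set via Theorem \ref{theo:nd}, and running the subdivision scheme of Algorithm \ref{alg:logM}. Your added observation about polynomiality in the ranking-system size $k$ matches the paper's own remark that the complexity of Theorem \ref{theo:nd} does not depend on the number of objectives.
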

Note that since Theorem \ref{theo:nd} states that the complexity of multiobjective problems does not depends of the number of objective functions (provided finiteness), we can increase the number of elements in the ranking system to obtain better approximations without increasing the theoretical complexity of the problem.

The following example illustrates the methodology described above.
\begin{example}[\cite{herrera-verdegay95}]
\label{ex:2}
Consider the following problem:
\begin{equation}
\label{example2}
\begin{array}{lrl}
\max & \;  \widetilde{c}_1\,x_1+5\,x_2& \\
s.t.&  2x_1 -x_2 &\leq 12\\
&  2x_1 + 8x_2 &\leq 35\\
 & x_1, x_2&\in \mathbb{Z}_+
 \end{array}
 \end{equation}
 where $\widetilde{c}_1$ is the triangular fuzzy number given by the following membership function:
 $$
 \mu_{\widetilde{c}_1}(z) = \left\{\begin{array}{ll}
 \frac{z-1}{2} & \mbox{if $1 \leq z \leq 3$}\\
  \frac{5-z}{2} & \mbox{if $3 \leq z \leq 5$}\\
  0 & \mbox{otherwise}\end{array}\right.
    $$

    Then, the $\alpha$-cuts for the fuzzy number $\widetilde{c}\,x=\widetilde{c}_1\,x_1+5\,x_2$ are:
    $$
    (\widetilde{c}\,x)^\alpha = [(2\alpha+1)\,x_1,(5-2\alpha)\,x_1+5\,x_2]
    $$
    that define a the triangular fuzzy number given by the following membership function:
     $$
 \mu_{\widetilde{c}\,x}(z) = \left\{\begin{array}{ll}
 \frac{z-x_1+5\,x_2}{2\,x_1} & \mbox{if $x_1+5\,x_2 \leq z \leq 3\,x_1+5\,x_2$}\\
  \frac{5\,x_1+5\,x_2-z}{2\,x_1} & \mbox{if $3\,x_1+5\,x_2 \leq z \leq 5\,x_1+5\,x_2$}\\
  0 & \mbox{otherwise}\end{array}\right.
    $$
    Using the ranking system given by $\{\frac{1}{2}, 1\}$ is enough to solve the problem. After transforming our problem to a problem with 4 objective functions, we have that Problem \eqref{example2} is equivalent to:
    \begin{equation}
\label{example2_1}
\begin{array}{lrll}
\max  &(3\,x_1+5\,x_2, & 2\,x_1+5\,x_2, 3\,x_1+5\,x_2,4\,x_1+5\,x_2)\\
s.t. &2x_1 -x_2 &\leq 12&\\
&  2x_1 + 8x_2 &\leq 35&\\
 & x_1, x_2&\in \mathbb{Z}_+
 \end{array}
 \end{equation}
The entire set of nondominated solutions is $\{(4,3), (5,3), (7,2)\}$.
    \end{example}

In the following remarks we present extensions of the above problems where complexity results can be stated.

\begin{remark}
Let us consider the following fuzzy integer problem, where both constrains and objective coefficients are fuzzy numbers:
\begin{equation}
\label{fip:remark1}
\begin{array}{lrl}
\max & \;  \widetilde{c}\,x& \\
s.t.&  A\,x &\lesssim b\\
 & x&\in P \cap \mathbb{Z}_+^n
 \end{array}
 \end{equation}
Then,
\begin{enumerate}
\item If the fuzzy numbers involved in \eqref{fip:remark1} are totally described by a finite ranking system, then, the solutions of \eqref{fip:remark1} can be encoded in a short generating function in polynomial time for fixed dimension. Furthermore, those solutions can be enumerated using a polynomial delay algorithm.
\item If the fuzzy numbers involved in \eqref{fip:remark1} are not totally described by a finite ranking system, then, the solutions of an approximated modification of \eqref{fip:remark1} (with approximation error as small as desirable) can be encoded in a short generating function in polynomial time for fixed dimension. Those solutions can be enumerated using a polynomial delay algorithm.
    \end{enumerate}
\end{remark}
\begin{proof}
The result follows from the following equivalent transformation of Problem \eqref{fip:remark1}:
\begin{equation}
\begin{array}{lrl}
\max & \;  (c_1^\alpha\,x, c_2^\alpha\,x, y)& \\
s.t.& x\in P \cap \mathbb{Z}_+^n\\
 & y \in [0, M] \cap \Z
 \end{array}
 \end{equation}
 with $M$ as described for Problem \eqref{crisp3}.
 \end{proof}

\begin{remark}[Multiobjective Fuzzy Integer Programming]
Let us consider the following multiobjective fuzzy integer problem, where both constrains and objective coefficients are fuzzy numbers:
\begin{equation}
\label{fip:remark2}
\begin{array}{lrl}
\max & \;  \widetilde{C}\,x& \\
s.t.&  A\,x &\lesssim b\\
 & x&\in P \cap \mathbb{Z}_+^n
 \end{array}
 \end{equation}
 where $\widetilde{C}$ is a $k\times m$ matrix of rational fuzzy numbers.

Then,
\begin{enumerate}
\item If the fuzzy numbers involved in \eqref{fip:remark2} are totally described by a finite ranking system, then, the solutions of \eqref{fip:remark2} can be encoded in a short generating function in polynomial time for fixed dimension. Furthermore, those solutions can be enumerated using a polynomial delay algorithm.
\item If the fuzzy numbers involved in \eqref{fip:remark2} are not totally described by a finite ranking system, then, the solutions of an approximated modification of \eqref{fip:remark2} (with approximation error as small as desirable) can be encoded in a short generating function in polynomial time for fixed dimension. Those solutions can be enumerated using a polynomial delay algorithm.
    \end{enumerate}
\end{remark}
\begin{proof}
The result follows from the following equivalent transformation of Problem \eqref{fip:remark2}:
\begin{equation}
\begin{array}{lrl}
\max & \;  (c_{11}^\alpha\,x, c_{12}^\alpha\,x, \ldots, c_{k1}^\alpha\,x, c_{k2}^\alpha\,x, y)& \\
s.t.& x\in P \cap \mathbb{Z}_+^n\\
 & y \in [0, M] \cap \Z
 \end{array}
 \end{equation}
 where, $c_{j1}^\alpha$ and $c_{j2}^\alpha$ are the lower and upper extremes of the $\alpha$-cut of the $j$-th row of $\widetilde{C}$ and $M$ as described for Problem \eqref{crisp3}.
 \end{proof}

\section{Conclusions}
\label{sec:conc}
In this paper we present methodologies for solving different models of fuzzy integer programs analyzing their theoretical complexity. We deal with fuzzy integer programs with fuzzy constraints and imprecise costs. The proofs of the results presented through this paper are based on the transformations of the fuzzy problems to (crisp) multiobjective integer programs and the use of generating functions of rational polytopes. We prove new complexity results about fuzzy integer programming, concluding that: (1) Encoding the entire set of optimal solutions of a broad class of fuzzy integer programs in a short generating function is doable in polynomial time for fixed dimension; and (2) Enumerating these solutions can be done using a polynomial-delay algorithm. For problems with imprecise cost where the fuzzy numbers involved in the problem are not totally described by a finite ranking system, we present similar results but considering approximated fuzzy numbers. The advantage of the results presented for this approximation is that the theoretical complexity does not depend of the global error considered for the approximation, and then we can choose approximations as precise as we want. Finally, we give similar results also for multiobjective integer fuzzy programs.

\section{Acknowledgement}
 The authors are grateful to David Gálvez for his useful comments about the fuzzy theory. This research was partially supported by Ministerio de Educaci\'on y Ciencia under grant MTM2007-67433-C02-01 and by Junta de Andalucia under grant P06-FQM-01366.


\begin{thebibliography}{99}

\bibitem{barvinok94}
A.~Barvinok, \emph{A polynomial time algorithm for counting integral points in
  polyhedra when the dimension is fixed}, Mathematics of Operations Research
  \textbf{19} (1994), 769--779.

\bibitem{barvinok03}
A.~Barvinok and K.~Woods, \emph{Short rational generating functions for lattice
  point problems}, Journal of the American Mathematical Society \textbf{16}
  (2003), 957--979.

\bibitem{blanco-puerto07b}
V.~Blanco and J.~Puerto, \emph{Short rational functions for multiobjective
  linear integer programming}, Arxiv, arXiv.[math.OC].0712.4295, December 2007.

\bibitem{blanco09}
V.~Blanco, \emph{Algebraic solution for solving discrete multiobjective
  problems}, Ph.D. thesis, Universidad de Sevilla, 2009.

\bibitem{brion88}
M.~Brion, \emph{Points entiers dans les poly\`edres convexes}, Annales
  scientifiques de l'\`Ecole Normale Sup\`erieure S\'er 4 \textbf{21} (1988),
  no.~4, 653--663.

\bibitem{deloera04}
J.A De~Loera, D.~Haws, R.~Hemmecke, P.~Huggins, B.~Sturmfels, and R.~Yoshida,
  \emph{Short rational functions for toric algebra and applications}, Journal
  of Symbolic Computation \textbf{38} (2004), 959--973.

\bibitem{garey-johnson79}
M.~R. Garey and D.~S. Johnson, \emph{Computers and intractability: a guide to
  the theory of np-completeness}, W. H. Freeman \& Co., 1979.

\bibitem{gonzalez-vila91}
Antonio Gonz\'{a}lez and Maria~A. Vila, \emph{A discrete method for studying
  indifference and order relations between fuzzy numbers}, Inf. Sci.
  \textbf{56} (1991), no.~1-3, 245--258.

\bibitem{herrera-verdegay95}
F.~Herrera and J.~L. Verdegay, \emph{Three models of fuzzy integer linear
  programming}, European Journal of Operational Research \textbf{83} (1995),
  no.~3, 581--593.

\bibitem{johnson88}
D.~S. Johnson and C.~H. Papadimitriou, \emph{On generating all maximal
  independent sets}, Inf. Process. Lett. \textbf{27} (1988), 119--123.

\bibitem{kaufmann-gupta91}
A.~Kaufman and M.M. Gupta, \emph{Introduction to fuzzy arithmetic: Theory and
  applications}, Van Nostrand Reinhold, New York, 1991.

\bibitem{khovanskii-puhlikov92}
A.G. Khovanskii and A.V. Pukhlikov, \emph{The riemann-roch theorem for
  integrals and sums of quasipolynomials on virtual polytopes}, Translation in
  St. Petersburg Mathematical Journal \textbf{4} (1992), 188--216.

\bibitem{lawrence91}
J.~Lawrence, \emph{Discrete and computational geometry},
  ch.~Rational-function-valued valuations on polyhedra, pp.~199--208, Discrete
  Mathematics and Theoretical Computer Science, 6, American Mathematical
  Society, Providence, RI, 1991.

\bibitem{lenstra81}
H.W.~(Jr.) Lenstra, \emph{Integer programming with a fixed number of
  variables}, Tech. Report Report 81--03, Mathematisch Instituut, Universiteit
  ban Amsterdam, 1981.

\bibitem{nemhauser-wolsey88}
GL~Nemhauser and LA~Wolsey, \emph{Integer programming and combinatorial
  optimization}, Wiley, New York, 1988.

\bibitem{schrijver86}
A~Schrijver, \emph{Theory of linear and integer programming}, John Wiley \&
  Sons, Chichester, 1986.

\bibitem{sierksma02}
G~Sierksma, \emph{Linear and integer programming : Theory and practice}, Marcel
  Dekker, New York, 2002.

\bibitem{slowinski90}
R.~{Slowinski} and J.~Teghem (eds.), \emph{Stochastic versus {F}uzzy approaches
  to multiobjective mathematical programming under uncertainty}, Kluwer
  Academic, Dordrecht, 1990.

\bibitem{swamy81}
M~N~S Swamy and K~Thulasiraman, \emph{Graphs, networks, and algorithms}, Wiley,
  New York, 1981.

\bibitem{zadeh65}
{L.A.} Zadeh, \emph{Fuzzy sets}, Information Control \textbf{8} (1965),
  338--353.

\bibitem{zimmermann01}
HJ~Zimmermann, \emph{Fuzzy set theory and its applications}, 4th ed., Springer,
  October 2001.

\end{thebibliography}
\end{document}